\documentclass{imanum}
\jno{drnxxx}
\received{15 October 2019}
\revised{}
\usepackage[utf8]{inputenc}
\usepackage[T1]{fontenc}
\usepackage{url}
\usepackage{amssymb}
\usepackage{amsmath}
\usepackage{mathrsfs} 
\usepackage{esint}
\usepackage{stmaryrd} 
\usepackage{phonetic} 
\usepackage{upgreek}
\usepackage{paralist} 
\usepackage{cancel}
\usepackage{textcomp}

\hyphenation{Che-by-shev}

\renewcommand{\emph}[1]{\textsl{#1}} 
\newcommand{\ML}{{\sc Matlab}}
\newcommand{\MP}{{\sc Maple}}



\newcommand{\diag}{\mathrm{diag}}
\renewcommand{\vec}[1]{\ensuremath{\bm{\mathbf{#1}}}}
\newcommand{\mat}[1]{\ensuremath{\bm{\mathbf{#1}}}}
\usepackage{bm}
\newcommand{\vecg}[1]{\ensuremath{\bm #1}}

\newcommand{\A}{\ensuremath{\mat{A}}}

\newcommand{\U}{\ensuremath{\mat{T}}}
\newcommand{\Z}{\ensuremath{\mat{Z}}}
\newcommand{\bary}{\ensuremath{\beta}}



\newcommand{\xder}[1]{\ensuremath{{{#1}^{\,\prime}}}}

\usepackage{xfrac} 
\newcommand{\defaultwidth}{.75\textwidth} 


\renewcommand{\textit}{\textsl}

\begin{document}
\title{Compact Finite Differences and Cubic Splines}
\shorttitle{Compact Finite Differences and Cubic Splines}
\author{Robert M.~Corless\thanks{Email: rcorless@uwo.ca} \\[2pt]
The Ontario Research Center for Computer Algebra
and The School of Mathematical and Statistical Sciences
The University of Western Ontario
London, Ontario,{Canada}
}

\maketitle

\begin{abstract}
{In this paper I uncover and explain---using contour integrals and residues---a connection between cubic splines and a popular compact finite difference formula.  The connection is that on a uniform mesh the simplest Pad\'e scheme for generating fourth-order accurate compact finite differences gives \textsl{exactly}
the derivatives at the interior nodes needed to guarantee twice-continuous differentiability for cubic splines.  
I also introduce an apparently new spline-like interpolant that I call a compact cubic interpolant; this is similar to one introduced in 1972 by Swartz and Varga, but has higher order accuracy at the edges.  I argue that for mildly nonuniform meshes the compact cubic approach offers some potential advantages, and even for uniform meshes offers a simple way to treat the edge conditions, relieving the user of the burden of deciding to use one of the three standard options: free (natural), complete (clamped), or ``not-a-knot'' conditions. Finally, I  establish that the matrices defining the compact cubic splines (equivalently, the fourth-order compact finite difference formul\ae) are positive definite, and in fact totally nonnegative, if all mesh widths are the same sign, for instance if the mesh is real and nodes are increasing only.}
{compact finite differences; cubic splines; barycentric form; compact cubic splines; contour integral methods; totally nonnegative matrices}
\end{abstract}




\section{Introduction}
\begin{flushright}
``The most popular choice continues to be a piecewise cubic approximating function.''\\
---Carl de Boor~\cite[p.~49]{deBoor(1978)}.
\end{flushright}

Cubic splines and the similar piecewise interpolant known as \texttt{pchip} are both widely used in piecewise polynomial interpolation.  Cubic splines give a twice continuously differentiable interpolant through given data, while \texttt{pchip} tries instead to preserve monotonicity and convexity.  Both are useful not just because they fit the data, but also because their derivatives are generally also good approximations to the derivative of the underlying function that one wants to approximate.  Loosely speaking, on a uniform mesh of width $h$ the fit of a cubic spline to a smooth function is accurate to $O(h^4)$ and the derivative is accurate to $O(h^3)$. For more careful error bounds see~\cite{swartz1972error} and~\cite{deBoor(1978)}.

In contrast to both cubic splines and \texttt{pchip}, a \textsl{true} cubic Hermite interpolant fits not only function values at the nodes but also true derivative values at the nodes: both cubic splines and \texttt{pchip} use substitutes computed from the function values.  The similarity of names (\texttt{pchip} stands for Piecewise Cubic Hermite Interpolant) does cause confusion. 
The relative rarity of the case when true derivatives $\rho_{k,1} = \xder{f}(\tau_k)$ are specified makes this confusion bearable. This paper will concentrate on cubic splines, and not on \texttt{pchip} or on true cubic Hermite interpolants. 
The classical reference for splines is~\cite{deBoor(1978)}, but see also chapter 3 of~\cite{Moler(2004)}.   

Compact finite differences are an efficient and accurate way of approximating the derivatives of known data.  There are several formul{\ae} in use.  See for instance~\cite{Lele(1992)} or~\cite{Collatz(1966)}.

In this paper I detail a connection between cubic splines on a uniform mesh and a popular compact finite difference formula that the traditional (monomial basis) method of computing the cubic splines had obscured.  Namely, the simplest Pad\'e scheme for generating fourth-order compact finite differences gives exactly
the derivatives at the interior nodes needed to guarantee twice-continuous differentiability; that is, a spline.
The literature both of splines and of finite differences is vast.  Nonetheless I believe that this connection is new.
In this paper I will give the two derivations, and show that, in the case of an equally-spaced mesh, equation~\eqref{eq:compactequispaced1} from the compact scheme and equation~\eqref{eq:spline} from the cubic spline are identical after normalization (each could be multiplied by an arbitrary constant without altering the solution). I then offer an explanation in section~\ref{sec:explanation} of why this is so (or why it might have been expected to be so).

I also comment on the various choices for treating the degrees of freedom at the endpoints, and introduce a new choice, which I call a ``compact cubic spline'', namely the use of a modified compact finite difference formula to give the derivatives at the endpoints.
I prove that the matrices involved are positive definite (indeed totally nonnegative) if the mesh widths have the same sign\footnote{Normally, the mesh will be real with the nodes increasing, so the mesh widths will all be positive.  Sometimes it is convenient to number real nodes from the rightmost endpoint, so the mesh widths will all be negative.  If the nodes are complex and not in a straight line, then indeed the mesh widths will not have the same (complex) sign.}.
This is similar in spirit to the approach of~\cite{swartz1972error}, who use a four-node explicit finite difference at each end to provide $O(h^3)$ accurate approximate derivatives at the edges of a uniform grid.  Here, we use a compact four-node formula that gives $O(h^4)$ accurate approximate derivatives at the edges, uniform grid or not, if the function being interpolated has at least five continuous derivatives.
\section{Notation}
We study two basic problems and their connection in this paper.
The first basic problem under consideration is construction of a piecewise polynomial interpolant $p(t)$ from function value data $\rho_{i,0}$ on a possibly nonuniform mesh made up of distinct nodes $\tau_i$, numbered $0 \le i \le n$.  This will lead to $n+1$ by $n+1$ matrices. We will always take $n \ge 4$ because one can fit a single degree $n$ polynomial on $n+1$ points, and if $n \le 3$ this can be done with a single cubic.

We postpone discussion of the second basic problem, which is to take derivatives of data given on a possibly nonuniform mesh, until section~\ref{sec:compact}. For simplicity in both problems, we restrict to the real case and suppose that values of a smooth function $f:[a,b] \to \mathbb{R}$ have been sampled on some partition of a finite interval $[a,b]$, with $a = \tau_0 < \tau_1 < \tau_2 < \cdots < \tau_{n-1} < \tau_n = b$.  We write $\rho_{i,0} = f(\tau_i)$ for the known function values, for $0 \le i \le n$. On each subinterval $[\tau_k,\tau_{k+1}]$ the piecewise interpolant will be given by $p(t) = p_k(t)$, a polynomial \textsl{of degree at most~$3$} satisfying $p_k(\tau_k) = \rho_{k,0}$ and $p_{k}(\tau_{k+1})=\rho_{k+1,0}$, and satisfying some other conditions that we will specify later.
We will refer to this as a piecewise cubic polynomial even if some or all of its degrees are less than~$3$.  

As is very well known, both cubic splines and the shape-preserving interpolant known as \texttt{pchip} construct values $\rho_{i,1}$ at the interior nodes $\tau_i$ for $1 \le i \le n-1$ of the \textsl{derivative} of each $p_k(t)$ in order to ensure continuous differentiability there: we wish $p_{i-1}'(\tau_{i}) = p_{i}'(\tau_{i})$ for all interior nodes, $1 \le i \le n-1$. A spline goes further than \texttt{pchip} and chooses these values in order to ensure \textsl{twice} continuous differentiability at the interior nodes: $p_{i-1}''(\tau_{i}) = p_{i}''(\tau_{i})$. By not imposing this last condition, a \texttt{pchip} can use some degrees of freedom to preserve monotonicity instead, usually by taking the derivatives $\rho_{i,1}$ to be a harmonic mean of the slopes of the secants on either side (see~\cite{Moler(2004)}).

\section{Cubic Splines\label{sec:spline}}
What follows in this section is a new derivation, or at least a derivation likely new to the reader, of the piecewise cubic Hermite spline. This derivation was apparently first published in Chapter~8 of~\cite{corless2013graduate}. The novelty is that it uses the barycentric forms for a cubic Hermite interpolant on each piece: that is, instead of trying to fit unknown cubic polynomials in a local monomial basis $p_k(t) = a_k + b_k(t-\tau_k) + c_k(t-\tau_k)^2 + d_k(t-\tau_k)^3$ to the data and trying to find reasonable ways to determine the $4n$ unknowns (there are $n$ subintervals), one instead works directly with
\begin{align}
p_k(t) = \frac{\displaystyle\sum_{i=k}^{k+1}\sum_{j=0}^1\sum_{\ell=0}^j\bary_{i,j}\rho_{i\ell}(t-\tau_i)^{\ell-j-1}}{\displaystyle\sum_{i=k}^{k+1}\sum_{j=0}^1\bary_{i,j}(t-\tau_i)^{-j-1}}\>,
\end{align}
which is the second barycentric form of the cubic Hermite interpolant
(note that only two nodes are used in this form and thus this determines only one piece of the interpolant). There are four unknowns in this formula: $\rho_{k,0}$, $\rho_{k,1}$, $\rho_{k+1,0}$, and $\rho_{k+1,1}$. 
The $\bary_{i,j}$ can be found from the partial fraction decomposition of the reciprocal of the node polynomial, and thus can be regarded as known once the nodes $\tau_k$ are specified:
\begin{multline*}
\frac{1}{(t-\tau_{k+1})^2(t-\tau_k)^2} = \sum_{i=k}^{k+1}\sum_{j=0}^1\bary_{i,j}(t-\tau_i)^{-j-1}\\
= \frac{\displaystyle\frac{-2}{(\tau_{k+1}-\tau_k)^3}}{t-\tau_k}+ \frac{\displaystyle \frac{1}{(\tau_{k+1}-\tau_k)^2}}{(t-\tau_k)^2} + \frac{\displaystyle \frac{2}{(\tau_{k+1}-\tau_k)^3}}{t-\tau_{k+1}}+\frac{\displaystyle\frac{1}{(\tau_{k+1}-\tau_k)^2}}{(t-\tau_{k+1})^2}\>.
\end{multline*}
There are only four $\bary_{i,j}$ for each interval, and we see them written above explicitly in terms of the given nodes $\tau_k$.
For convenience, one can simplify the second barycentric form to the usual cubic Hermite polynomial basis (here $h_{k+1} = \tau_{k+1}-\tau_k$ is the width of the
interval):
\begin{multline}
p(t) = {\frac { \left( t-\tau_{{k+1}} \right) ^{2} \left( 2\,t-2\,\tau_{{k}}+
h_{{k+1}} \right) \rho_{{k,0}}}{{h_{{k+1}}}^{3}}}+{\frac { \left( t-\tau_{
{k+1}} \right) ^{2} \left( t-\tau_{{k}} \right) \rho_{{k,1}}}{{h_{{k+1}}
}^{2}}}\\
+{\frac { \left( t-\tau_{{k}} \right) ^{2} \left( 2\,\tau
_{{k+1}}+h_{{k+1}} -2\,t\right) \rho_{{k+1,0}}}{{h_{{k+1}}}^{3}}}+{\frac {
 \left( t-\tau_{{k}} \right) ^{2} \left( t-\tau_{{k+1}}
 \right)\rho_{{k+1,1}} }{{h_{{k+1}}}^{2}}}
\>. \label{eq:cubichermitebary}
\end{multline}

\begin{remark}\label{rem:backstabHermite}
Evaluation of these formul\ae\ and their
derivatives does not entail significant rounding error.  A standard backward error analysis (not given here) shows
that if IEEE standard floating-point arithmetic is used, then (using the notation of~\cite{Higham(1996)} ) $\mathrm{fl}\left(p(x)\right)$ is the exact value of a polynomial going through the data $[\rho_{k,0}(1+\theta_6),\rho_{k,1}(1+\theta_6), \rho_{k+1,0}(1+\theta_5), \rho_{k+1,1}(1+\theta_5) ]$; that is the floating-point evaluation of $p(x)$ is the exact value of a
polynomial going through data that is at most six rounding errors different to the original data.  
\end{remark}

Notice that the $\rho_{i,0}$ (not the $\rho_{i,1}$, which represent derivative values) are the known data values. We want to choose the $n+1$ slopes $\rho_{i,1}$ to make the resulting interpolant as smooth as possible. We will see that we can make it $\mathscr{C}^2[\tau_0,\tau_n]$; that is, the second derivative will be continuous at each interior node. This makes the piecewise interpolant a cubic spline, albeit represented in a different basis.
Notice also that we may choose the $\rho_{i,1}$ in such a way that we automatically have $p(t)\in\mathscr{C}^1[\tau_0,\tau_n]$: just take the slope at the right end of one interval to be the same slope at the left end of the next.  This is very natural because $\rho_{i,1}$  is then interpreted as `the' slope at the node $\tau_i$ (indeed it would be somewhat unnatural to have different slopes on the left and right, though we could do that if we wanted).  Having made our interpolant continuously differentiable by this device, then $p_{k-1}'(\tau_k^-)=\rho_{k,1}=p'_k(\tau_k^+)$. To further ensure $p(t)\in\mathscr{C}^2[\tau_0,\tau_n]$, we want to make the second derivatives equal, \emph{i.e.},
\begin{align}
p''_{k-1}(\tau^-_k)=p''_k(\tau_k^+)\qquad k=1,2,\ldots,n-1\>.
\end{align}
I personally found this easier than the necessary algebra in the local monomial case. We would have
\begin{align}
p_k(t)=\rho_{k,0}+\rho_{k,1}(t-\tau_k)+c_k(t-\tau_k)^2+d_k(t-\tau_k)^3
\end{align}
and even to make the function just $\mathscr{C}^1$ we would have to impose the condition
\begin{align*}
p_k'(\tau^-_{k+1})=p'_{k+1}(\tau_{k+1}^+)\>,
\end{align*}
which isn't automatic; we would have to \emph{enforce}
\begin{align*}
\rho_{k,1}+2c_k(\tau_{k+1}-\tau_k)+3d_k(\tau_{k+1}-\tau_k)^2=\rho_{k+1,1}\>.
\end{align*}
For $\mathscr{C}^2$, we would have to enforce yet another condition, namely
\begin{align*}
2c_k+6d_k(\tau_{k+1}-\tau_k)=2c_{k+1}\>.
\end{align*}
Of course, this can be done, and the solution is even elegant.  These equations can be reduced to a \emph{tridiagonal} system of equations for the slopes $\rho_{k,1}$, and explicit formul\ae\ for the $c_k$ and $d_k$ are known once the slopes are known. The solution is shown in {\tt splinetx.m} in~\cite{Moler(2004)}.

But here, because we start with the Hermite interpolational basis, we have a simpler (but equivalent) task: just enforce the second derivative conditions.  To do this, we need a formula for the second derivative of the cubic Hermite interpolant at the nodes. A short computation by hand or with a computer algebra language such as \MP\ shows that
\begin{align}
p_{k-1}''(\tau_k^-)=\frac{2}{\tau_k-\tau_{k-1}}(2\rho_{k,1}+\rho_{k-1,1})-\frac{6}{(\tau_k-\tau_{k-1})^2}(\rho_{k,0}-\rho_{k-1,0})
\end{align}
and
\begin{align}
p_k''(\tau_k^+)=\frac{-2}{\tau_{k+1}-\tau_k}(2\rho_{k,1}+\rho_{k+1,1})+\frac{6}{(\tau_{k+1}-\tau_k)^2}(\rho_{k+1,0}-\rho_{k,0})\>.
\end{align}
Equating these at interior nodes $1\leq k\leq n-1$ gives $n-1$ equations constraining the slopes. 
Explicitly, these equations are
\begin{multline}
\frac{2}{\tau_k-\tau_{k-1}}\rho_{k-1,1}+4\left(\frac{1}{\tau_k-\tau_{k-1}}+\frac{1}{\tau_{k+1}-\tau_k}\right)\rho_{k,1} +\frac{2}{\tau_{k+1}-\tau_k}\rho_{k+1,1}\\
=\frac{6}{(\tau_{k+1}-\tau_k)^2}(\rho_{k+1,0}-\rho_{k,0})+\frac{6}{(\tau_k-\tau_{k-1})^2}(\rho_{k,0}-\rho_{k-1,0}) \label{eq:hermiteqns}
\end{multline}
for $k=1,2,3,\ldots,n-1$. There are $n-1$ equations. The structure of the resulting matrix is tridiagonal. These are equivalent to those of equation~(9) of~\cite[p.~54]{deBoor(1978)}.  It is convenient to introduce $h_{k} = \tau_k-\tau_{k-1}$ and $h_{k+1} = \tau_{k+1}-\tau_k$ and multiply equation~\eqref{eq:hermiteqns} by $h_kh_{k+1}/(h_k+h_{k+1})$ to get
\begin{multline}
    \frac{2h_{k+1}}{h_k+h_{k+1}}\rho_{k-1,1}+4\rho_{k,1} +\frac{2h_{k}}{h_k+h_{k+1}}\rho_{k+1,1}\\
=\frac{6h_{k}}{h_{k+1}(h_k+h_{k+1})}(\rho_{k+1,0}-\rho_{k,0})+\frac{6h_{k+1}}{h_{k}(h_k+h_{k+1})}(\rho_{k,0}-\rho_{k-1,0})\>. \label{eq:rescaledhermiteqns}
\end{multline}

If the mesh is equally-spaced, that is $\tau_k = a + (b-a)k/n = a + k h$ for $0 \le k \le n$, with $h = (b-a)/n$, then these equations become for $1 \le i \le n-1$
\begin{align}
    \rho_{i-1,1} + 4\rho_{i,1} + \rho_{i+1,1} &= \frac{3}{h}\left( \rho_{i+1,0} - \rho_{i-1,0}\right)\>. \label{eq:spline}
\end{align}
If we specify two edge conditions and then solve this tridiagonal system of $n-1$ equations, we will get derivative values at the interior nodes that will ensure a twice continuously differentiable piecewise interpolant, where each piece $p_k(t)$ is given by equation~\eqref{eq:cubichermitebary}.  

This leaves two degrees of freedom, which (counterintuitively) may be unwanted: the user will have to choose what to do with these two degrees of freedom.  Some common choices are, first, to ask for \textsl{three} times continuous differentiability at $\tau_1$ and at $\tau_{n-1}$: this is called the ``not-a-knot'' condition.  Second, one can specify arbitrary values of the derivatives at the ends, which is sometimes called a ``clamped'' spline; if the derivative values are \textsl{correct} for the function being interpolated, it is called a \textsl{complete} spline in~\cite{deBoor(1978)}.  Finally, one can ask for a second derivative of zero at the ends, called the ``natural'' spline.

We will see a new choice, actually a new set of choices, similar to a fifth choice  due to~\cite{swartz1972error} that is mentioned in~\cite{deBoor(1978)}, after the next section.

\section{Compact finite differences}
\label{sec:compact}
This section is adapted from chapter 11 of~\cite{corless2013graduate}.  The problem being addressed is the problem of computing derivatives of a function known only at discrete values; we are not necessarily interested in computing an interpolant.  One could use, for instance, simple finite differences and approximate $f'(\tau_k)$ by $(f(\tau_{k+1})-f(\tau_k))/(\tau_{k+1}-\tau_k)$. The main idea of a \textsl{compact} finite difference is that instead of using a single explicit finite difference formula to evaluate a derivative at a point, we have a whole mesh of function values and we wish to compute the derivatives at \textsl{all} the nodes.  This is quite like the case of a global interpolating polynomial where one constructs a differentiation matrix (see~\cite{Weideman(2000)}), and indeed we will have the equivalent of a differentiation matrix here; but it will not be explicitly formed.  Instead we will solve a banded linear system. 

The purpose of this paper is to show that the canonical example of a compact finite difference happens to have a significant relationship with cubic splines.  This fact does not seem to have been noticed before, although it is difficult to be sure, given the variation in nomenclature and the large number of works on piecewise polynomials of one kind or another.  Specifically, the relationship is as follows.

Let $\Delta$ be the difference operator $\Delta(f) = f(x+h)-f(x)$. Then the so-called \textsl{operator approach} to finite differences (which appears for instance in~\cite{boole1860treatise}) gives a relationship between the differentiation operator $D$ and $\Delta$, namely $D = \ln(1+\Delta)/h$.
Then the $(2,2)$ Pad\'e approximant for $\ln(1+\Delta)$, namely
\begin{equation}
   D \doteq {\frac {\Delta+{\Delta}^{2}/2}{1+\Delta+{\Delta}^{2}/6}}\>,
\end{equation}
gives rise to a compact finite difference scheme (also called a Pad\'e scheme)
that happens to be fourth-order accurate: apply the \textsl{denominator} $1+\Delta+{\Delta}^{2}/6$ to $\xder{f}$ and the \textsl{numerator} $\Delta + \Delta^2/2$ to $f$ and we find after shifting to center at $t$ that, up to terms of $O(h^4)$,
\begin{align*}
\frac{1}{6}\xder{f}(t-h)+\frac{2}{3}\xder{f}(t)+\frac{1}{6}\xder{f}(t+h) = \frac{1}{2h}\left(f(t+h)-f(t-h)\right)\>.
\end{align*}
We will discuss further the operator approach briefly in section~\ref{sec:other}.
This formula gives us a tridiagonal (whence ``compact'') system of equations for the unknown derivatives, \textsl{at all interior nodes}; again something special has to be done at the edges.  This tridiagonal system of equations turns out to be identical up to scaling---when the nodes are equally-spaced---to equations~\eqref{eq:hermiteqns}.

The idea of solving a system of equations to find our finite difference approximation to the derivatives may be unfamiliar.  The simplest finite difference formul\ae\ make straightforward linear combinations such as $(f(t+h)-f(t-h))/(2h)$ to approximate $f'(t)$.  But compact finite difference formul\ae\ are different. 
That is, instead of simply \emph{applying a formula} to a vector of function values to get a vector of derivative values,
we instead have to \emph{set up and solve} a linear system of equations for the unknown derivatives.
Having to solve equations is more complicated than just using a formula, but it has several
advantages.

To better understand where the system of equations for this formula comes from, we give another derivation.  Make the following simplifying assumptions, for the moment.
Suppose $\xder{f}(\tau_0)$ and $\xder{f}(\tau_n)$ are known (just to make it simple) and that $\tau_{k+1}-\tau_k=h$ is constant. Then fix attention on
one particular node, say $\tau_k$.  The formula above becomes, when multiplied by $6$ and putting $t-h= \tau_{k-1}$, $t = \tau_k$, and $t+h=\tau_{k+1}$,
\begin{align}
\xder{f}(\tau_{k-1})+4\xder{f}(\tau_k) +\xder{f}(\tau_{k+1}) = \frac{3}{h}(f(\tau_{k+1})-f(\tau_{k-1}))\>. \label{eq:compactequispaced1}
\end{align}
\textbf{This equation is exactly equation~\eqref{eq:spline}.} Recall that $\rho_{k,1}$ represents derivatives $f'(\tau_k)$ and $f(\tau_k) = \rho_{k,0}$.
Now we let $k$ vary over all the indices of the interior nodes, $1 \le k \le n-1$.  Each interior node gives us one equation.
Each equation only contains at most three of the unknown derivatives (and the equation for $k=1$ touches the known derivative $\xder{f}(\tau_0)$, while the equation for $k=n-1$
touches the known derivative $\xder{f}(\tau_n)$).  This gives us a tridiagonal linear system of equations to solve for the unknown derivatives $\xder{f}(\tau_k)$, $1 \le k \le n-1$.
Call the tridiagonal matrix $\A$.  Notice also that the right-hand side of the system involves linear combinations of the values of $f(\tau_k)$ at different nodes---these are supposed to be
known.  Call the (also tridiagonal) matrix that forms that combination, $\mat{B}$.  Note that $\mat{B}$ has a zero diagonal.
The system
$\A\vec{v}=\mat{B}\vecg{\rho}$ needs to be solved computationally to get the vector $\vec{v}$ of desired derivatives.

In \emph{effect} this computes the differentiation matrix $\mat{D}$ as $\A^{-1}\mat{B}$, but in practice one never explicitly computes $\A^{-1}$ because it is a full matrix.
Instead, of course, to compute $\mat{D}\vec{y}$, one solves $\A\vec{v} = \mat{B}\vec{y}$ for $\vec{v}$, which is formally $\A^{-1}\mat{B}\vec{y}$.

\begin{figure}[ht]
\begin{center}
\includegraphics[width=\defaultwidth]{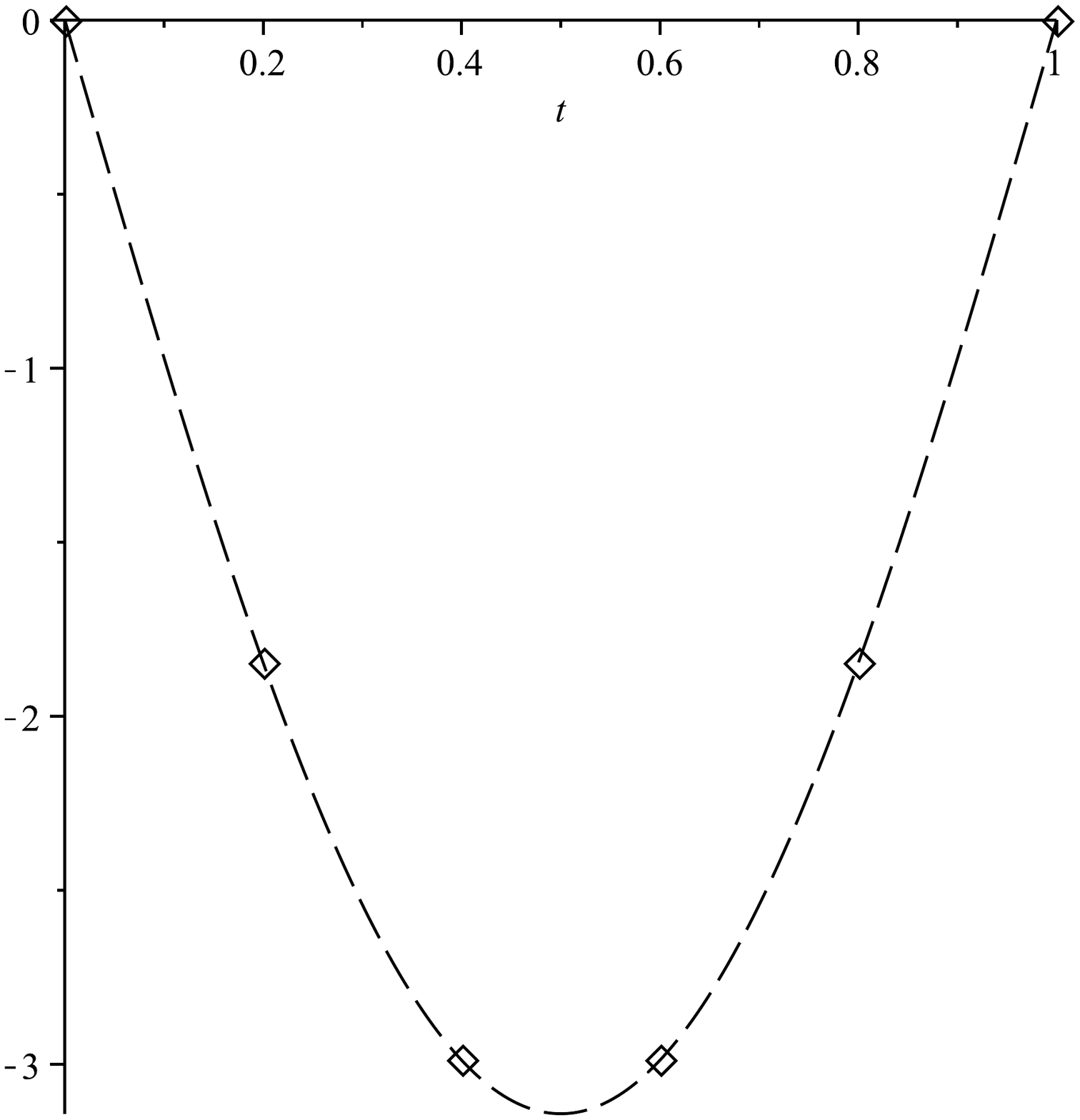}
\caption[4th-order uniform mesh compact finite difference derivative.]{\label{fig:compact4error} Fourth-order uniform mesh compact finite difference derivative graphed with the exact derivative (dashed line).  Even with just five subintervals, visual accuracy is achieved in this example.}
\end{center}
\end{figure}

I now give a derivation of a fourth-order compact finite difference method for nonuniform meshes, using contour integrals because this also works for the barycentric form.  This provides a point of theoretical continuity with section~\ref{sec:spline}, and will be useful for the explanation of the connection to cubic splines in section~\ref{sec:explanation}.  I will also give a similar derivation for a compact formula valid at the endpoints, which gives us a new way to deal with the extra degrees of freedom in a cubic spline. 

Consider the following partial fraction expansion: here we put $h_{k-1}=rh$ and $h_k = sh$ and we assume both $r$ and $s$ are different from zero (we could without loss of generality assume that one of $r$ or $s$ was $1$ but this doesn't help much, and I find the symmetry in the formul\ae\ below useful for understanding).
\begin{multline*}
\frac{1}{(z+rh)^2z^2(z-sh)^2} = \frac{1}{{r}^{2}{h}^{4} \left( s+r \right) ^{2} \left( z+rh \right) ^{2}}
+\frac{4\,r+2\,s}{ \left( z+rh \right){r}^{3}{h}^{5} \left( s+r \right) ^{3}}\nonumber\\
+\frac{1}{{r}^{2}{h}^{4}{s}^{2}{z}^{2}} + \frac{2\,r-2\,s}{z{r}^{3}{h}^{5}{s}^{3}}
+ \frac{1}{{h}^{4} \left( s+r \right) ^{2}{s}^{2} \left( z-sh \right)^{2}} + \frac{-4\,s-2\,r}{ \left( z-sh \right) {h}^{5} \left( s+r \right) ^{3}{s}^{3}}\>,
\end{multline*}
By a standard argument in complex variables, the contour integral of
\begin{equation}
\frac{1}{2\pi i}\oint_C \frac{f(z)}{(z+rh)^2z^2(z-sh)^2}\,dz
\end{equation}
over the contour that encloses all the zeros of the denominator (the numerator is a polynomial and has no poles) is zero for polynomials $f(x)$ of degree $6-2=4$ or less.  Once we expand this using the partial fraction above and use the Cauchy Integral formula
\begin{equation}
\frac{f^{(j)}(p)}{j!} = \frac{1}{2\pi i}\oint_C \frac{f(z)}{(z-p)^{j+1}}\,dz
\end{equation}
and multiply by $(s+r)^2$ for convenience,
this gives us the following formula, which is exact for polynomials of degree at most four\footnote{The fact that this is valid for polynomials of degree four is an important point in understanding why the equations are the same as for \textsl{cubic} splines, as we will see in section~\ref{sec:explanation}.}.
\begin{multline}
{\frac {1}{{r}^{2} }} \xder{f}(-rh) + {\frac {\left( s+r \right) ^{2}}{{r}^{2}{s}^{2}}} \xder{f}(0)
+ {\frac {1}{ {s}^{2}}}\xder{f}(sh)\\
= -{\frac {4\,r+2\,s}{{r}^{3}{h}(r+s)}} f(-rh)
-{\frac {2\left(r-s\right)\left( s+r \right) ^{2}}{{r}^{3}{h}{s}^{3}}} f(0)
+
{\frac {4\,s+2\,r}{{h}(r+s) {s}^{3}}} f(sh)\>. \label{eq:variablecompact}
\end{multline}
These equations, if $r\ne s$, are \textsl{not} the same as equations~\eqref{eq:rescaledhermiteqns}, whatever constant we use to normalize them.
Setting $r=s=1$, however,  gives equation~\eqref{eq:compactequispaced1}.
If we now have a mesh of distinct points $\tau_0 < \tau_1 < \cdots < \tau_{n-1} < \tau_n$, we can lay this compact formula first on $\tau_0$, $\tau_1$, and $\tau_2$,
with $rh = \tau_1 - \tau_0$ and $sh = \tau_2 - \tau_1$ which, if we have  a reference step width $h$, say $h = (\sum (\tau_{k+1}-\tau_k) )/n$,
gives us an equation relating the derivative values on these three mesh points to the function values on the mesh points.
We then lay the formula over $\tau_1$, $\tau_2$, and $\tau_3$, giving us another equation---one for each interior point, $\tau_1$, $\tau_2$,
$\ldots$, $\tau_{n-1}$.  The linear system for the unknown derivative values is, like the spline equations, tridiagonal; but we only have $n-1$ equations and $n+1$
unknowns $\xder{f}(\tau_0)$, $\xder{f}(\tau_1)$, $\ldots$, $\xder{f}(\tau_n)$.  To make a square system, we need two more equations. 

\subsection{Truncation error in the compact formula}
Once the formula has been found, one can do a more standard Taylor series analysis on it, for instance in \MP.   Taking the Taylor series in $h$ of the difference $L-R$ where $L$ is the left-hand side of equation~\eqref{eq:variablecompact} and $R$ is the right-hand side,  the error term turns out to be
\begin{multline}
L-R = {\frac { \left( s+r \right) ^{2}  }{120}}{h}^{4}f^{(5)}(0)-{\frac {
 \left( r-s \right)  \left( s+r \right) ^{2}  }{360}}{h}^{5}f^{(6)}(0)+
O(h^6)\>, \label{eq:compactfindiferror}
\end{multline}
which is $O(h^4)$ regardless of the local mesh ratios $r$ and $s$. If $r=s=1$ then this becomes equation~\eqref{eq:compactequispaced1} and the leading error term becomes $h^4/30$ times the fifth derivative of $f$ evaluated at zero.
\subsection{Truncation error in the spline formula, considered as a compact finite difference}
We can do the same kind of error analysis as we have just done for the compact formula, but for equations~\eqref{eq:rescaledhermiteqns}.  Put $\rho_{k,1} = f'(\tau_k)$ and $\rho_{k,0} = f(\tau_k)$ in equations~\eqref{eq:rescaledhermiteqns}. Take $h_{k-1} = rh$ and $h_k = sh$. For a sufficiently differentiable function $f$ we may take the Taylor series in $h$ of the difference $L-R$ between the left- and right-hand sides. Then the result is
\begin{equation}
    L-R = -\frac{rs(r-s)}{12}f^{(4)}(0) h^3 + \frac{rs(r^2-rs+s^2)}{30}f^{(5)}(0)h^4 + O(h^5)\>.
\end{equation}
This is the truncation error that occurs if the spline continuity equations~\eqref{eq:hermiteqns} are used as a compact finite difference formula. 
We see that if the mesh is not locally approximately uniform, that is $r \ne s + O(h)$, then this error is $O(h^3)$, not $O(h^4)$.  We may compare this with equation~\eqref{eq:compactfindiferror}; we see that if $r=s$ they are identical, as we will see that they must be.

\subsection{What to do at the edges}
We now return to computing fourth-order accurate compact finite differences.
We can use the same contour integral method to look for fourth-order
formulae at either end, giving equations involving $\tau_0$ and its nearest mesh neighbours, and $\tau_n$ and its nearest neighbours.  We will want
the formul\ae\ likewise exact for polynomials of degree four or less.  Since the matrix is so far tridiagonal, we try to keep it that way and we thus look
for relations of the form
\begin{equation}
a_0 \xder{f}(\tau_0) + b_0 \xder{f}(\tau_1) = c_0 f(\tau_0) + c_1 f(\tau_1) + c_2 f(\tau_2) + c_3 f(\tau_3)\>.\label{eq:compactvariablemesh}
\end{equation}
The reader may verify that fourth order accuracy is not possible in general without the function value at $\tau_3$.
This still qualifies as `compact' though because we use only two extra mesh points at the left end, and similarly only two extra on the right, and these appear in the right-hand side and do not change the tridiagonality of the matrix.  We will need $n \ge 4$, giving five by five matrices at the smallest.
This ansatz suggests looking at the partial fraction decomposition of
\begin{equation}
{\frac {1}{ \left( z-\tau_{{0}} \right) ^{2} \left( z-\tau_{{1}} \right) ^{2
} \left( z-\tau_{{2}} \right)  \left( z-\tau_{{3}} \right) }}\>,
\end{equation}
from which we straightforwardly find (of course by using a computer algebra system) that (with $h_1 = \tau_1-\tau_0$, $h_2 = \tau_2-\tau_1$, etc and normalizing so $b_0=1$)
\begin{align*}
a_{{0}}={\frac {h_2 \left( h_2+h_3 \right) }{ \left( h_1+h_2 \right)  \left( h_1+h_2+h_3 \right) }}
\quad\mathrm{and}
\quad
b_{{0}}=1\>,
\end{align*}
and that the $c_k$s are
\begin{align*}
c_0 &=-{\frac {h_2 \left( h_2+h_3 \right)  \left( 4\,{h_
1}^{2}+6\,h_1h_2+3\,h_1h_3+2\,{h_2}^{2}+2\,h_{
{1}}h_3 \right) }{h_1 \left( h_1+h_2 \right) ^{2}
 \left( h_1+h_2+h_3 \right) ^{2}}}
\\
c_1 &= {\frac {2\,h_2(h_2-h_1)+h_3(2h_2-h_1)}{h_1h_2 \left( h_2+h_3 \right) }}
\\
c_2 &= {\frac {{h_1}^{2} \left( h_2+h_3 \right) }{h_2
 \left( h_1+h_2 \right) ^{2}h_3}}
\\
c_3 &= -{\frac {{h_1}^{2}h_2}{h_3 \left( h_2+h_3
 \right)  \left( h_1+h_2+h_3 \right) ^{2}}}
\>.
\end{align*}
It turns out that the residual error in~\eqref{eq:compactvariablemesh} is, as desired, $O(h^4)$.  In detail, if $\tau_{k+1} = \tau_k + r_{k+1} h$, for $k=0$, $1$, and $2$, and $h_1 = \tau_1-\tau_0 = h$,
then the residual error is seen by a Taylor series computation to be 
\begin{equation}
L-R = \frac{(r_1+r_2)r_1}{120} h^4f^{(5)}(0) + O(h^5)\>.
\label{eq:leftenderror}
\end{equation} 
In the uniform mesh case this reduces to $h^4/60$ times the fifth derivative.  A similar formula holds for the other end (indeed, simply reverse the labels, $\tau_{k} \leftrightarrow \tau_{n-k}$).
This gives us closure in our search for a compact, variable mesh
fourth-order finite difference formula.  We are left with a tridiagonal matrix $\A$ with entries depending on the $h_k = \tau_{k}-\tau_{k-1}$.

Apart from the first and last rows, this matrix is diagonally dominant because $4 - 4(h_k^2+h_{k-1}^2)/(h_k+h_{k-1})^2 = 8h_kh_{k-1}/(h_k+h_{k-1})^2$.  We will see that it is indeed positive definite if the mesh widths have the same sign.
For meshes with widely varying widths, however, the matrix can be ill-conditioned.  See figure~\ref{fig:RandomMeshConditionNumbers}. 

By direct computation with Maple for small examples, we see that the determinant is a rational polynomial in all the $h_k$ with positive coefficients in the numerator and a squared denominator, and indeed is positive definite. Monitoring the sign of the determinant of $\A$ as computed numerically by Matlab's \texttt{det} function, we find that for $n\ge 4$ the determinant is always reported as being positive.  In fact, more is true.
\begin{theorem} \label{thm:totallynonnegative}
If all mesh widths $h_k$ have the same sign, then for $n\ge 4$ 
the $n+1$ by $n+1$ matrix $\A$ of the compact finite difference formula with these fourth order formul\ae\ at the edges 
is \textsl{totally nonnegative}: that is, the determinants of \textsl{all minors} are nonnegative.  More, the determinants of all principal minors are strictly positive, so the matrix is positive definite.
\end{theorem}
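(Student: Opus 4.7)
Without loss of generality take $h_k>0$ for all $k$; the uniformly negative case follows by an overall sign flip. Under this assumption every nonzero entry of $\A$ is strictly positive, because each interior row has the symmetric pattern $[1/h_k^2,(h_k+h_{k+1})^2/(h_kh_{k+1})^2,1/h_{k+1}^2]$ up to a positive common factor and the edge coefficients $a_0,a_n$ are positive rational expressions in the $h_k$. The strategy is to produce a factorization $\A=LU$ in which $L$ is unit lower bidiagonal with positive subdiagonal $\A_{i,i-1}/p_{i-1}$ and $U$ is upper bidiagonal with positive diagonal $p_i$ (the pivots) and positive superdiagonal $\A_{i-1,i}$. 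Any minor of a nonnegative bidiagonal matrix is either a product of band entries or is zero by the bandedness, so both $L$ and $U$ are totally nonnegative; and the product of two totally nonnegative matrices is totally nonnegative by the Cauchy--Binet identity. The whole theorem therefore reduces to showing that the pivots of $\A$ are strictly positive, and since the $k$th leading principal minor equals $p_1p_2\cdots p_k$, this simultaneously gives strict positivity of all leading principal minors.

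Positive pivots are established by induction. Direct telescoping (of the kind already carried out for the top-left $2\times 2$ minor in the paragraphs preceding the theorem) gives explicit closed forms for $p_1=a_0$, $p_2$, and $p_3$, all visibly positive rational functions of the $h_i$. For $k\ge 3$ the row contributing to $p_{k+1}$ is interior and the pivot recurrence becomes $p_{k+1}=(h_k+h_{k+1})^2/(h_kh_{k+1})^2-1/(h_k^4p_k)$. The monotone invariant $p_k\ge 1/h_k^2$ is preserved by this recurrence: substituting the lower bound and simplifying gives $p_{k+1}\ge(h_k+2h_{k+1})/(h_kh_{k+1}^2)\ge 1/h_{k+1}^2$, the final inequality being immediate from $h_k>0$; this propagates positivity through the interior pivots $p_3,\ldots,p_n$. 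The delicate step is the last pivot $p_{n+1}=a_n-1/(h_n^2p_n)$ coming from the right-edge row, which is not diagonally dominant, so the loose invariant is insufficient on its own. I would sharpen the bound on $p_n$ by iterating the interior recurrence one further step, exploiting that the interior pivot map has an attracting fixed point comfortably larger than the threshold $1/(h_n^2a_n)$; alternatively a continuity argument on the path-connected domain $\{h_k>0\}$ combined with unique solvability of the compact-cubic spline problem (nonvanishing of $\det\A$) and a direct verification at the uniform mesh also works. This last pivot is the main technical obstacle of the proof.

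To extend positivity from leading principal minors to \emph{all} principal minors, note that for a tridiagonal matrix the principal submatrix $\A[I;I]$ is block diagonal with blocks indexed by the maximal runs of consecutive indices in $I$, so every principal minor factors into a product of contiguous-index principal minors. Interior contiguous blocks inherit strict diagonal dominance from the identity $D_k-L_k-U_k=2/(h_kh_{k+1})>0$ (observed just before the theorem) and hence have positive determinants by a routine argument; left-edge and right-edge contiguous blocks are handled by the pivot induction above, applied directly or via the symmetry $\tau_k\leftrightarrow\tau_{n-k}$. Finally, for positive definiteness observe that although $\A$ is not symmetric as presented, its off-diagonals can be balanced by positive diagonal left-multiplication: solving $S_{i+1}/S_i=\A_{i,i+1}/\A_{i+1,i}>0$ produces a symmetric matrix $S\A$ whose principal-minor signs agree with those of $\A$, so $S\A$ is symmetric positive definite by Sylvester, and $\A$ is similar to the symmetric positive definite matrix $S^{1/2}\A S^{-1/2}$ and therefore has strictly positive real eigenvalues.
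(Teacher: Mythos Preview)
Your overall strategy is parallel to the paper's: both reduce total nonnegativity to positivity of the leading principal minors (equivalently, the LU pivots) and both maintain an inductive lower bound on those pivots through the interior rows. Where you invoke the bidiagonal $LU$ factorization plus Cauchy--Binet, the paper cites the Gantmacher--Krein characterization of totally nonnegative irreducible tridiagonal matrices; these are equivalent devices. Your invariant $p_k\ge 1/h_k^{2}$ is, after the paper's diagonal rescaling to~$\U$, exactly the paper's relation $D^{k,n+1}>h_{k-1}^{2}D^{k-1,n+1}$. One small omission: you never verify the base case of this invariant---you say the first three pivots are ``visibly positive'', but positivity is weaker than $p_3\ge 1/h_3^{2}$, and the latter is what the induction actually needs.

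The real gap is the last pivot. You correctly flag it as the ``main technical obstacle'' but do not resolve it. Of your two proposed closures, the continuity argument is circular: ``unique solvability of the compact-cubic spline problem'' is precisely the nonvanishing of $\det\A$ that the theorem is meant to establish, so it cannot be assumed here. Your other suggestion---iterate the interior recurrence one more step to sharpen the bound on $p_n$---is exactly right and is what the paper carries out: from the invariant one obtains (in the paper's variables) $D^{n-1,n+1}/D^{n-2,n+1}>h_{n-2}^{2}+2h_{n-2}h_{n-1}$, strictly sharper than the crude $>h_{n-2}^{2}$, and substituting this into the Laplace expansion for $D^{n+1,n+1}$ yields an explicit positive lower bound for $\alpha-h_{n-1}^{2}D^{n-1,n+1}/D^{n,n+1}$, where $\alpha$ is the corner entry. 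Your remark about an ``attracting fixed point'' does not help, since for a nonuniform mesh the pivot recurrence is nonautonomous. On the positive side, your symmetrization step for positive definiteness---observing that $\A$ is not symmetric and passing to $S^{1/2}\A S^{-1/2}$---is actually more careful than the paper's own statement, which leaves that point implicit.
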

\begin{proof}
Without loss of generality we will assume all $h_k>0$.
By a theorem of Gantmacher and Krein (see~\cite[Chapter 29]{Hogben(2006)}), an irreducible tridiagonal matrix is totally nonnegative if and only if its entries are nonnegative and its leading principal minors are nonnegative.
Here all entries are nonnegative.
To prove the theorem we must then verify only that all leading principal minors are positive.  We do so by induction, for all but the last, and then handle that specially.


For simplicity, write $\A=\Z\U^{n+1,n+1}$ where $\Z = \diag(1,4/(h_1+h_2)^2, 4/(h_2+h_3)^2, \ldots, 4/(h_{n-1}+h_{n})^2,1)$ to remove the common denominator of each of the interior rows.  Multiplying by positive diagonal matrices leaves the properties of total positivity (nonnegativity) undisturbed. This leaves each interior row as $h_{k}^2, (h_k^2 + h_{k-1})^2, h_{k-1}^2$, for easier manipulation. Explicitly writing the six-by-six case, $\U^{6,6} = $
\begin{equation*}
    \left[ \begin {array}{cccccc} {\frac {h_{{2}} \left( h_{{3}}+h_{{2}}
 \right) }{ \left( h_{{2}}+h_{{1}} \right)  \left( h_{{3}}+h_{{2}}+h_{
{1}} \right) }}&1&0&0&0&0\\ \noalign{\medskip}{h_{{2}}}^{2}& \left( h_
{{2}}+h_{{1}} \right) ^{2}&{h_{{1}}}^{2}&0&0&0\\ \noalign{\medskip}0&{
h_{{3}}}^{2}& \left( h_{{3}}+h_{{2}} \right) ^{2}&{h_{{2}}}^{2}&0&0
\\ \noalign{\medskip}0&0&{h_{{4}}}^{2}& \left( h_{{3}}+h_{{4}}
 \right) ^{2}&{h_{{3}}}^{2}&0\\ \noalign{\medskip}0&0&0&{h_{{5}}}^{2}&
 \left( h_{{4}}+h_{{5}} \right) ^{2}&{h_{{4}}}^{2}
\\ \noalign{\medskip}0&0&0&0&1&{\frac {h_{{4}} \left( h_{{3}}+h_{{4}}
 \right) }{ \left( h_{{4}}+h_{{5}} \right)  \left( h_{{3}}+h_{{4}}+h_{
{5}} \right) }}\end {array} \right] \>.
\end{equation*}
Let $\U^{k,n+1} = \U[1{:}k,1{:}k]$ denote the leading principal submatrices. Let $D^{k,n+1} = \det(\U^{k,n+1})$ for $1 \le k \le n+1$.
By direct computation,
\begin{align}
    D^{1,n+1} &= {\frac {h_{{2}} \left( h_{{3}}+h_{{2}} \right) }{ \left( h_{{2}}+h_{{1
}} \right)  \left( h_{{3}}+h_{{2}}+h_{{1}} \right) }}
\nonumber \\
    D^{2,n+1} &= {\frac {h_{{1}}h_{{2}}h_{{3}}\left( h_{{2}}+h_{{1
}} \right)}{\left( h_{{2}}+h_{{1
}} \right)  \left( h_{{3}}+h_{{2}}+h_{{1}} \right)}}\nonumber \\
    D^{3,n+1} &= {\frac {h_{{1}}h_{{3}}{h_{{2}}}^{2} \left( h_{{3}}+h_{{2}} \right)\left( h_{{3}}+h_{{2}}+h_{{1}} \right) }{\left( h_{{2}}+h_{{1
}} \right)  \left( h_{{3}}+h_{{2}}+h_{{1}} \right)}}
 \label{eq:firstminorsU}
\end{align}
\begin{remark} If $n=3$, then $D^{4,4}=0$.   This is excluded from the theorem; as discussed previously, in that case, just fit a polynomial to all the data.
\end{remark}

Returning to the proof, we see that $D^{1,n+1} > 0$, $D^{2,n+1}>0$,  and $D^{3,n+1} > 0$, for any choice of positive~$h_k$. 

Laplace expansion about the last row of $\U^{k,n+1}$ for $3 \le k \le n$ gives a recursive formula for the determinant:
\begin{equation}
    D^{k,n+1} = (h_{k-1}+h_k)^2D^{k-1,n+1} - h_{k-2}^2h_k^2 D^{k-2,n+1}\>. \label{eq:interiordeterminants}
\end{equation}
For the last minor, namely the determinant of the full matrix, we have
\begin{equation}
D^{n+1,n+1} = \frac{h_{n-1}(h_{n-1}+h_{n-2})}{(h_{n-1}+h_{n})(h_{n}+h_{n-1}+h_{n-2})}D^{n,n+1} - h_{n-1}^2D^{n-1,n+1}\>. \label{eq:fulldeterminant}
\end{equation}
By direct computation,
\begin{align}
    D^{3,n+1} &= h_2^2D^{2,n+1} + q_2 \label{eq:firstQ}
\end{align}
where $q_2$ is the positive function
\begin{equation}
    q_2 = \frac{h_1h_2^2h_3^2(h_1+2h_2+h_3)}{(h_1+h_2)(h_1+h_2+h_3)}\>.
\end{equation}
Now assume inductively that $D^{k-1,n+1} = h_{k-2}^2D^{k-2,n+1} + q_{k-2}$ for a positive function $q_{k-2}$. We see above that this is true for $k-1 = 3$, or $k=4$. We will simultaneously establish by induction that $D^{k,n+1} > 0$ and that $D^{k,n+1} = h_{k-1}^2D^{k-1,n+1} + q_{k-1}$ for another positive function $q_{k-1}$, for $k \le n$.
\begin{remark}
The assumption $n\ge 4$ of the theorem is used here: we cannot start the induction otherwise. If $n=3$ then $D^{4,4}$ is not given by the interior formula, but by the end formula.
\end{remark}

Returning to the proof, by the interior  recurrence relation above,
\begin{align}
    D^{k,n+1} &= (h_k+h_{k-1})^2 D^{k-1,n+1} -h_{k}^2 h_{k-2}^2 D^{k-2,n+1} \nonumber \\
    &= h_{k-1}^2D^{k-1,n+1} + q_{k-1}
\end{align}
where, by the inductive assumption and expanding
$(h_k+h_{k-1})^2$,
\begin{equation}
    q_{k-1} = 2h_kh_{k-1}D^{k-1,n+1} + h_k^2q_{k-2}
\end{equation}
is clearly positive, being the sum of positive terms. 

This establishes by induction that all leading minors $D^{k,n+1}$ up to, but not including, the final determinant $D^{n+1,n+1}$, are positive. We verify that the final determinant is positive by a different method. 

If the $(n+1,n+1)$ entry in $\U^{n+1,n+1}$ were $\alpha$, then the determinant $D^{n+1,n+1}$ would be positive if
\begin{equation}
    \alpha D^{n,n+1} - h_{n-1}^2 D^{n-1,n+1} >0\>;
\end{equation}
or, on rearranging,
\begin{align}
    \alpha &> \frac{h_{n-1}^2 D^{n-1,n+1}}{D^{n,n+1}} \nonumber \\
    &> \frac{h_{n-1}^2}{ (h_{n}+h_{n-1})^2 - h_{n}^2h_{n-2}^2 D^{n-2,n+1}/D^{n-1,n+1}} 
\end{align}
by equation~\eqref{eq:interiordeterminants}.  Examining the ratio $D^{n-2,n+1}/D^{n-1,n+1}$ will lead to the \textsl{sufficient} condition
\begin{equation}
    \alpha > \frac{h_{n-1}^2}{ (h_{n-1}+h_{n})^2 - h_n^2h_{n-2}/(h_{n-2}+2h_{n-1})}
\end{equation}
for $D^{n+1,n+1}$ to be positive. 
In detail, because $D^{n-2,n+1}$ and $D^{n-1,n+1}$ are interior determinants, they satisfy
\begin{equation}
    \frac{D^{n-1,n+1}}{D^{n-2,n+1}} = h_{n-2}^2 + 2h_{n-2}h_{n-1} + \frac{h_{n}^2 q_{n-2}}{D^{n-2,n+1}}
\end{equation}
where that last term is positive. Dropping the last term and replacing $=$ by $>$ then following the manipulations (inversion, negation, inversion) through the algebra of inequalities gives the \textsl{upper} bound
\begin{multline}
    \frac{h_{n-1}^2}{ (h_{n}+h_{n-1})^2 - h_{n}^2h_{n-2}^2 D^{n-2,n+1}/D^{n-1,n+1}} \\
    < \frac{h_{n-1}^2}{ (h_{n}+h_{n-1})^2 - h_{n}^2h_{n-2}^2/(h_{n-2}^2 + 2h_{n-2}h_{n-1})}.
\end{multline}
Verifying that $\alpha = \frac{h_{n-1}(h_{n-1}+h_{n-2})}{(h_{n-1}+h_{n})(h_{n}+h_{n-1}+h_{n-2})} $ is larger than the right hand side, and thus larger than the left, is straightforward: after some algebra we have
\begin{equation}
    \alpha - \mathrm{rhs} = 
{\frac {h_{{n-1}}h_{{n-2}}h_{{n}} \left( h_{{n-2}}+2\,h_{{n-1}}+h_{{n}
} \right) }{ \left( h_{{n-1}}+h_{{n}} \right)  \left( h_{{n}}+h_{{n-1}
}+h_{{n-2}} \right) A }}
\end{equation}
where 
\begin{equation}
    A =  2\,{h_{{n}}}^{2}+2\,h_{{n}}h_{{n-2}}+4\,h_
{{n}}h_{{n-1}}+h_{{n-2}}h_{{n-1}}+2\,{h_{{n-1}}}^{2} 
\end{equation}
which, being positive, completes the theorem.
{\hfill$\natural$}
\end{proof}

\begin{figure}
    \includegraphics[width=\defaultwidth]{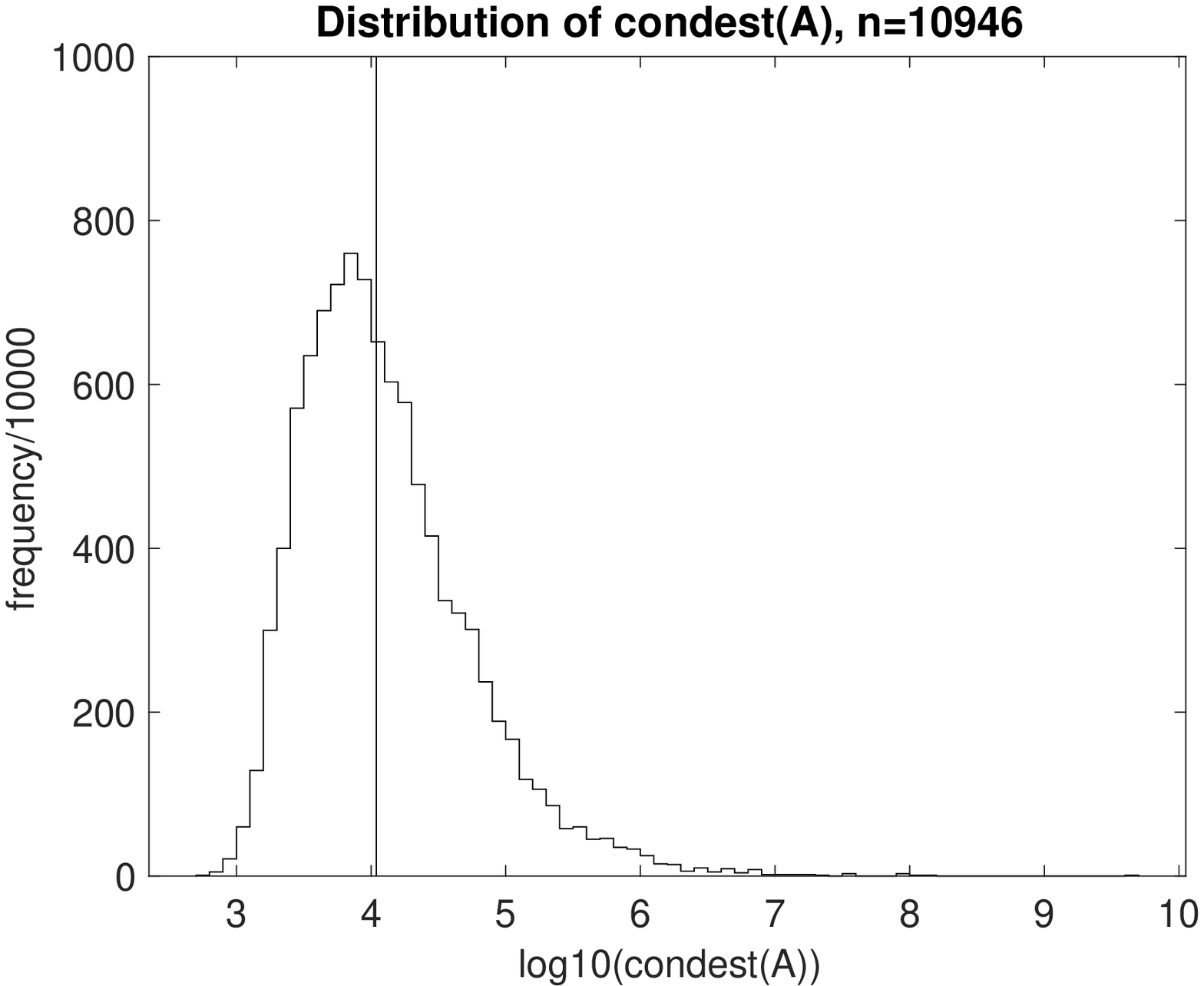}
    \caption{Condition numbers of the tridiagonal compact finite difference matrix for random meshes.  We sampled $10,000$ meshes of dimension $n=10,946$ (a Fibonacci number). The meshes were generated by \texttt{h = rand(1,n)}. The vertical line is at the dimension, $n$, suggesting that the mean condition number grows like $n$. Similar graphs were generated for other $n$, up to $n=46,368$. Plot generated by the command \texttt{ histogram( log(condsR(:,1))/log(10),'DisplayStyle','stairs')}.}
    \label{fig:RandomMeshConditionNumbers}
\end{figure}

\subsubsection{Some details of the equally-spaced case}
In the equally-spaced case the $(1,1)$ and $(n+1,n+1)$ entries each become $1/3$, the diagonal entries are $4$ otherwise, and all sub and superdiagonal entries are $1$.  LU factoring the matrix gives bidiagonal $\mat{L}$ and $\mat{U}$.  The diagonal entries of $\mat{U}$ are $1/3$, $1$, $3$, $11/3$, $41/11$, and so on up until the penultimate entry, for $2 < k < n-2$ with the form $a_{k+1}/a_k$ where $a_{k+1} = 4a_k - a_{k-1}$, and initial conditions $a_0 = a_1 = 1$ not reflected in the first diagonal entry. This is easily proved by induction.  The final entry in $\mat{U}$, call it $u_{n+1,n+1}$ must have $a_{n-1}/a_{n}\cdot 1 + u_{n+1,n+1} = 1/3$ (the subdiagonal entries in $L$ are the reciprocals of the diagonal entries of $\mat{U}$) and so $u_{n+1,n+1} = 1/3-a_{n-1}/a_{n}$. Solving the recurrence relation for the $a_k$ we have
\begin{equation}
    a_k = \left(\frac{1}{2} - \frac{\sqrt{3}}{6}\right) \left(2 + \sqrt{3}\right)^k + \left(\frac{1}{2} + \frac{\sqrt{3}}{6}\right) \left(2 - \sqrt{3}\right)^k  
\end{equation}
so $1/3-u_{n+1,n+1}$ very rapidly approaches $2-\sqrt{3} = 1/(2+\sqrt{3}) = 0.2677949$; indeed already $11/41 = 0.26829$ and thereafter $a_k/a_{k+1} = 1/(2+\sqrt{3}) + O(c^{-k})$ where $c \approx 13.9$.  This is important because the final entry $u_{n+1,n+1}$ is then very close to $1/3 - 1/(2+\sqrt{3}) \approx 0.06538 $.

These details can be used to establish the following, which I first observed as an experimental fact:
\begin{theorem}
The $1$-norm condition number of the $n+1$ by $n+1$ matrix $\A$ for uniform mesh is asymptotically $63 + 36\sqrt{3} \approx 125.35$, exponentially quickly independent of $n$.
\end{theorem}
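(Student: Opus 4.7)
The plan is to compute $\|\A\|_1$ and $\|\A^{-1}\|_1$ separately and multiply. The first is immediate: the interior columns of $\A$ have entries $1,4,1$ with absolute column sum $6$, while the two edge columns sum to $4/3$, so $\|\A\|_1 = 6$ for $n\ge 2$. For the second, I would exploit the checkerboard sign pattern of $\A^{-1}$. Setting $\mat{D}=\diag\bigl((-1)^k\bigr)$, the matrix $\mat{B}:=\mat{D}\A\mat{D}$ is symmetric positive definite (same principal minors as $\A$) with diagonal $(1/3,4,\dots,4,1/3)$ and off-diagonals $-1$; being a Stieltjes matrix, $\mat{B}^{-1}$ is entrywise nonnegative, and in fact $|\A^{-1}|_{ij} = (\mat{B}^{-1})_{ij}$ entrywise. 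Since $\A$ is symmetric, $\|\A^{-1}\|_1 = \|\A^{-1}\|_\infty = \max_k w_k$, where $\vec{w}$ solves $\mat{B}\vec{w}=\vec{1}$.

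The crucial step is solving $\mat{B}\vec{w}=\vec{1}$ in closed form. The interior equations $-w_{k-1}+4w_k-w_{k+1}=1$ admit the constant particular solution $w_k=1/2$, and the homogeneous solutions come from the characteristic equation $x^2-4x+1=0$---whose roots $\lambda=2+\sqrt{3}$ and $\mu=2-\sqrt{3}=1/\lambda$ are precisely those appearing in the paper's closed form for $a_k$. Thus $w_k = \tfrac12 + \alpha\lambda^k+\beta\mu^k$, with $\alpha,\beta$ determined exactly by the two edge equations $(1/3)w_1-w_2=1$ and $-w_n+(1/3)w_{n+1}=1$. Because $\lambda^n$ grows while $\mu^n$ decays, to leading order the right-edge equation forces $\alpha\lambda^n \to 4/(\lambda-3)=2\sqrt{3}+2$, so $\alpha = O(\lambda^{-n})$; the left-edge equation, in the same limit, reduces to $\beta\mu(1/3-\mu)=4/3$, and since $\mu(1/3-\mu)=(11\sqrt{3}-19)/3$, rationalization gives $\beta\to 22\sqrt{3}+38$, with error $O(\mu^n)$.

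Evaluating at $k=1$: the term $\alpha\lambda$ is $O(\lambda^{-n+1})$ and vanishes, while $\beta\mu\to(22\sqrt{3}+38)(2-\sqrt{3}) = 10+6\sqrt{3}$ after expansion, so $w_1 \to 21/2+6\sqrt{3}$ with error $O((2-\sqrt{3})^n)$; by the left-right symmetry of $\A$, $w_{n+1}$ has the same limit. Both $\alpha$ and $\beta$ turn out positive, so $k\mapsto \alpha\lambda^k+\beta\mu^k$ is strictly convex in $k$ and $w_k$ therefore attains its maximum at the two endpoints. Hence $\|\A^{-1}\|_1 = w_1 \to 21/2+6\sqrt{3}$, and multiplying by $\|\A\|_1=6$ yields $\kappa_1(\A)\to 63+36\sqrt{3}$ with exponential rate $O((2-\sqrt{3})^n)$. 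The main obstacle I anticipate is the careful algebraic bookkeeping in the coupled edge equations and confirming positivity of $\alpha,\beta$ uniformly in $n$; the underlying idea---that $\mat{B}^{-1}\vec{1}$ satisfies a constant-coefficient three-term recurrence with characteristic roots $2\pm\sqrt{3}$---is already implicit in the paper's explicit formula for $a_k$.
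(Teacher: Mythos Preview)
Your argument is correct. The paper actually omits its proof entirely (``Omitted for brevity''), so there is no detailed argument to compare against; the preceding paragraph merely records the LU factoring of $\A$ and the closed form for the $a_k$, stating that ``these details can be used to establish'' the theorem. Your route---conjugating by the checkerboard sign matrix to obtain a Stieltjes matrix, reducing $\|\A^{-1}\|_1$ to $\max_k w_k$ with $\mat{B}\vec{w}=\vec{1}$, and solving the constant-coefficient recurrence---is a clean alternative that does not pass through the LU factors explicitly, though it rests on the same characteristic roots $2\pm\sqrt{3}$.

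Two small points strengthen what you already have. First, the positivity of $\alpha$ and $\beta$ for every $n\ge 4$ (not just asymptotically) follows from the persymmetry of $\A$: since $w_k=w_{n+2-k}$ and $\lambda\mu=1$, matching exponentials forces $\alpha=\beta\mu^{n+2}$, so $\alpha$ and $\beta$ share a sign; substituting this into the left-edge equation gives $\beta\bigl[\mu(1-3\mu)-\mu^{n+1}(3\lambda-1)\bigr]=4$, and the bracket is strictly positive precisely when $\mu^{n}<(3\sqrt{3}-5)/(3\sqrt{3}+5)=\mu^{3}$, i.e.\ for $n\ge 4$. This closes the ``main obstacle'' you anticipated and is consistent with the paper's remark that the $4\times 4$ case ($n=3$) is singular. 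Second, your convexity step is fine once $\alpha,\beta>0$ is secured: $k\mapsto \alpha\lambda^k+\beta\mu^k$ is a positive combination of convex exponentials, hence convex, so its maximum over $\{1,\ldots,n+1\}$ is at an endpoint; symmetry then gives $\|\A^{-1}\|_1=w_1$ exactly, not merely asymptotically.
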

\begin{proof}
Omitted for brevity.
\end{proof}

\subsubsection{Yet another alternative treatment of the endpoints}
In~\cite{Zhao(2007)} we didn't use equation~\eqref{eq:leftenderror}.  Instead we added an extra node $\tau_4$, which requires $n>3$, in order to specify the ratio $a_0/b_0 = c := 2+\sqrt{3}$. This allows analytical factoring of the matrix, speeding the process up slightly (it lowers the constant in the $O(n)$ cost). In detail, we found the edge formula
\begin{align}
    c f'(0) + f'(h) = -\frac {1}{h} \bigg( & \left( -25c/12-1/4 \right) f \left( 0 \right) \nonumber\\
& + \left( 4\,c-5/6 \right) f
 \left( h \right) + \left( -3\,c+3/2 \right) f \left( 2\,h \right) \nonumber \\
 &+
 \left( 4c/3-1/2 \right) f \left( 3\,h \right) + \left( -c/4+1/12
 \right) f \left( 4\,h \right)  {\bigg)}
\end{align}
(and a similar formula at the right edge with the sign of $h$ reversed but where $c = 4$)
which has truncation error
\begin{equation}
    \frac{(4c-1)h^4}{20} f^{(v)}(0) + \frac{(40c-11)h^5}{120} f^{(vi)}(0) + \cdots\>.
\end{equation}
This gives compact finite differences, again fourth order accurate. Note that the truncation error above is larger than the truncation error for the four-node formula; this seems to be compensated for by the fact that the condition number is smaller, by a similar amount.  

A similar formula is available for a nonuniform mesh. However, it does not have the same speed advantage that it does for a uniform mesh, because the matrix cannot (so far as I know) be analytically factored. The proof of Theorem~\ref{thm:totallynonnegative} is easily modified to show that the resulting matrix (for nonuniform meshes) is also totally nonnegative for positive mesh widths; in fact, this follows from a general fact about totally nonnegative matrices that says that the $(1,1)$ entries  and the $(n+1,n+1)$ entries can always be \textsl{increased} without altering the total nonnegativity (see~\cite[Chapter 29]{Hogben(2006)} for a statement and reference).

As mentioned, the advantage of this formula for a uniform mesh is that the tridiagonal matrix $\A$ can be \textsl{analytically} factored into $\mat{L}$ and $\mat{U}$, where independent of dimension the subdiagonal elements of $\mat{L}$ are all $1/c = 1/(2+\sqrt{3})$, and the diagonal elements of $\mat{U}$ are all $c$.  This saves the $O(n)$ cost of factoring the matrix, both in terms of operation count and in terms of storage.  The relatively trivial disadvantage of this formula is that it needs one more function value ($f(4h)$) on the right hand side. This makes no practical difference because in the only case where this matters, $n=3$, one can instead use a single cubic.
For the uniform mesh formula, the $1$-norm condition number is again asymptotically constant, but now being less than $3$ irrespective of the dimension; however, the truncation error at the edges is larger, and thus this advantage does not seem to matter.
\subsection{Nonuniform mesh compact derivatives/spline in practice}
So, how well do these methods work in practice?  Actually, pretty well.  As expected, if the mesh ratios are not ``too large,'' \emph{i.e.}, adjacent subintervals are not too different in width (so that the $r_k$ factors
in the edge formul\ae\ and the $r$ and $s$ factors in the interior formula are not too large), then the formula is very similar to the cubic spline and its derivative, for smooth functions.  As the mesh ratios depart more seriously from uniformity, the compact method begins to be more accurate than the cubic spline.

Note, however, that the formul\ae\ as initially written in terms of the $\tau_k$ are quite likely to produce rounding errors, especially in the formul\ae\ at the edges,
and should be rewritten using $h_{i+1} = \tau_{i+1}-\tau_i$ and factored, as done in the six-by-six example above, wherever
possible.  When this is done the influence of rounding errors, while still felt, is significantly reduced.
These formul\ae\ have been implemented in \ML\ in a program called \texttt{vcompact4}, available on the code repository for~\cite{corless2013graduate}.  
For this paper, I have written a program called \texttt{compactcubic}, which I discuss later.


\begin{figure}[ht]
\begin{center}
\includegraphics[width=\defaultwidth]{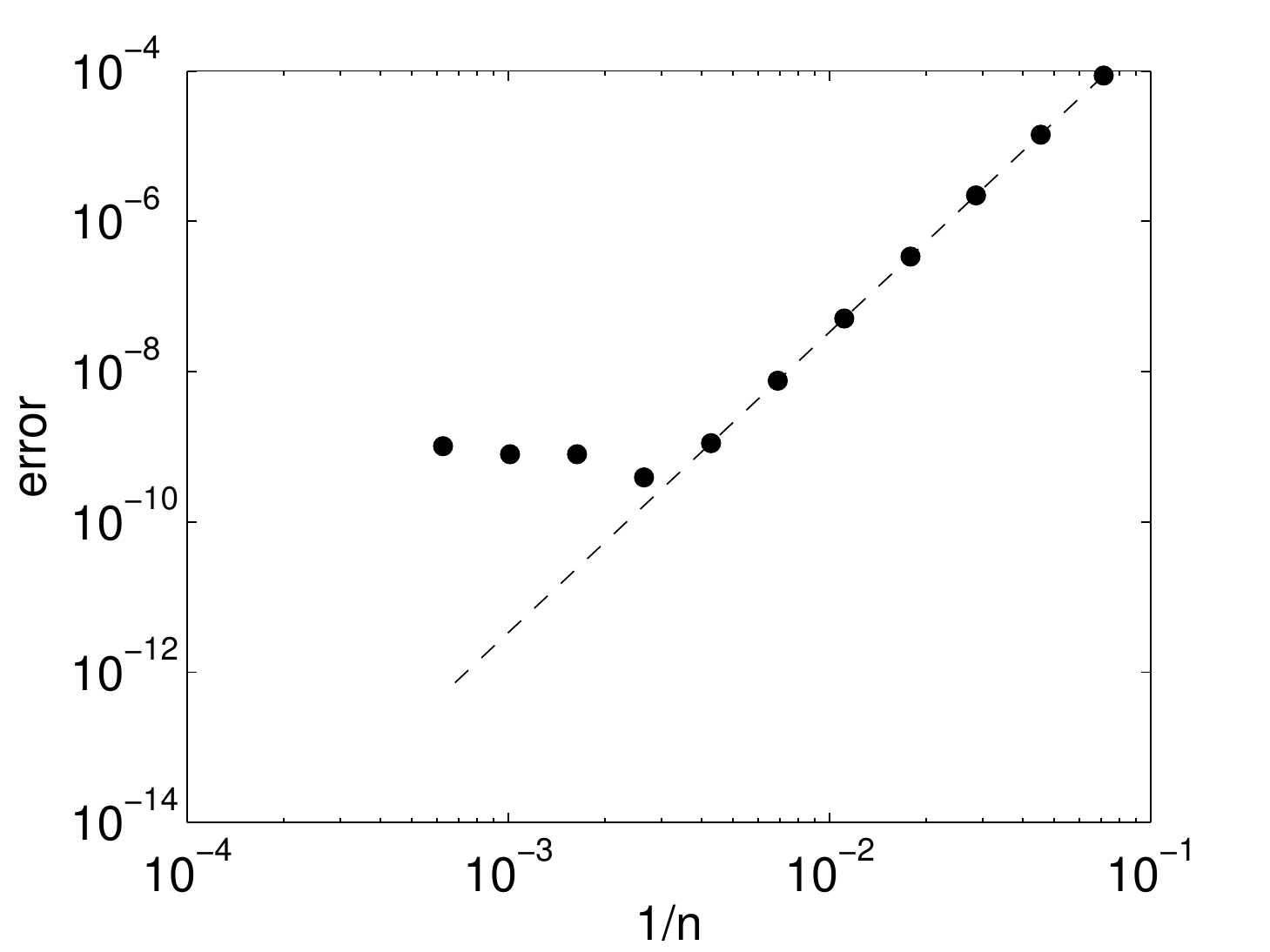}
\caption[4th-order nonuniform mesh compact finite difference maximum error.]{\label{fig:vcompact4} Fourth-order nonuniform mesh compact finite difference maximum error computing $(\sfrac{1}{\Gamma})'(x)$ on $1\le x \le 3$.
The spatial mesh was Chebyshev points $x_j = 2 + \cos(\sfrac{\pi j}{n})$ for $0 \le j \le n$, for various~$n$.  Theoretical fourth-order behaviour
is shown by the dashed line.}
\end{center}
\end{figure}
\begin{figure}
    \centering
    \includegraphics[width=\defaultwidth]{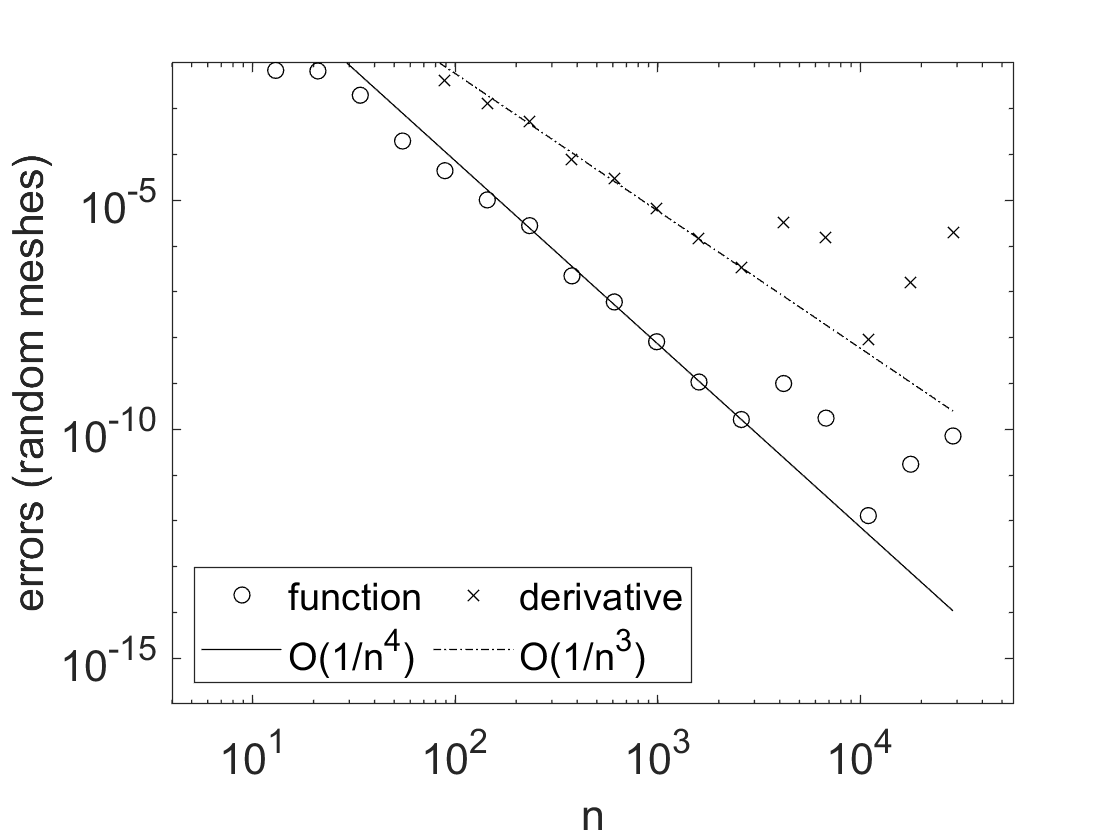}
    \caption[4th-order nonuniform mesh compact finite difference maximum error]{\label{fig:RandomMeshesRunge} Error in fourth-order compact cubic spline and its derivative for the Runge function on random meshes on $[-1,1]$. The random meshes were generated by \texttt{h = rand(1,n)} and then using a cumulative sum to generate grid points. The mesh widths varied in each individual mesh by factors up to about $10^5$.}
\end{figure}

\begin{figure}
    \centering
    \includegraphics[width=\defaultwidth]{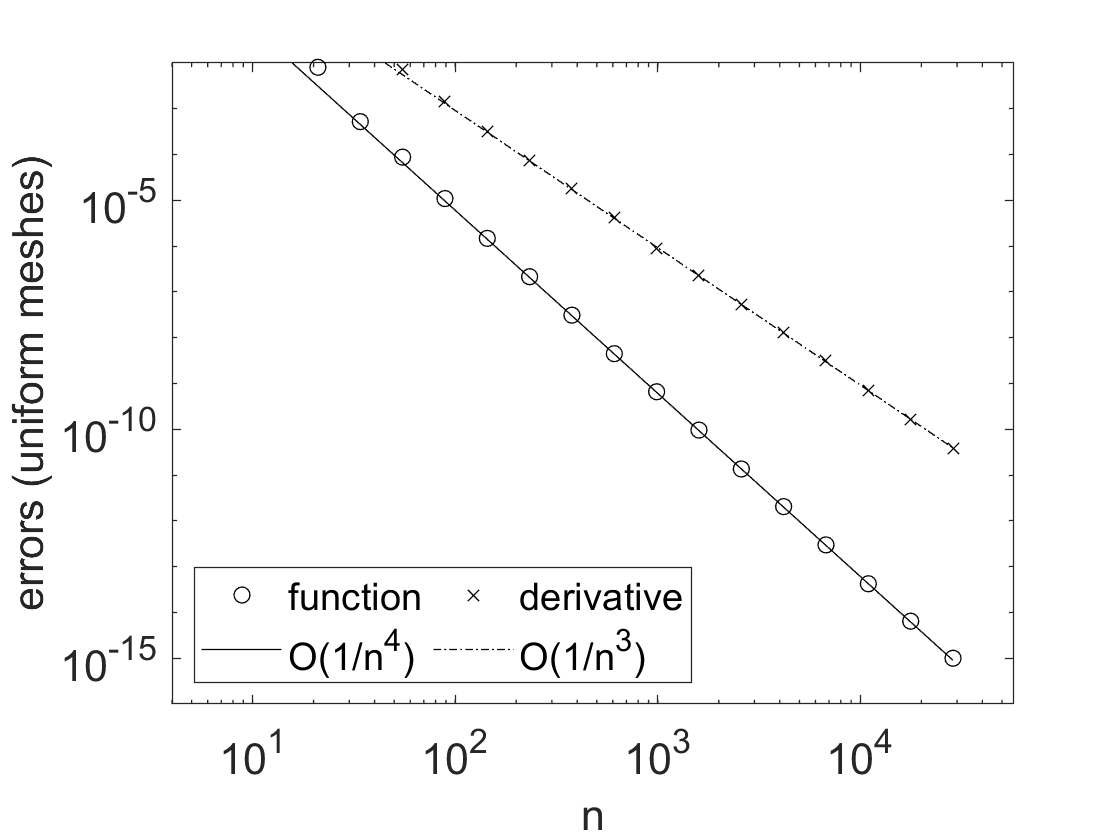}
    \caption[4th-order uniform mesh compact finite difference maximum error]{\label{fig:UniformMeshesRunge} Error in fourth-order compact cubic spline and its derivative for the Runge function on uniform meshes on $[-1,1]$.}
\end{figure}

\begin{remark}
The tridiagonal system for splines or compact finite differences is always nonsingular for positive mesh widths, as we proved.  Moreover, it is often well-conditioned (as we will see in the next section).
As always with finite-differences, however, 
rounding errors will make
their presence felt for small enough interval widths.  This is because differentiation is \textsl{infinitely ill-conditioned} (a fact which is well-known, but rarely discussed as such: see the
exposition in chapter~11 of~\cite{corless2013graduate} for one
instance).  
\end{remark}
\section{Explanation\label{sec:explanation}}
Why are the equations for a cubic spline, which guarantee twice-continuous differentiability for the piecewise cubic interpolation of the data, the same equations as those for (equally-spaced) fourth-order compact finite differences? The problems being solved by the two algorithms are different!

Saying it again: in the cubic spline case, we are constructing separate cubic polynomials on each subinterval $\tau_k \le t \le \tau_{k+1}$.  To guarantee twice continuous differentiability, we specify that the interior derivatives $\rho_{i,1}$ for $1 \le i \le n-1$ should satisfy certain equations.  But why should those equations give us fourth-order accurate derivatives in the equally-spaced case, but only third-order accurate derivatives for locally nonuniform meshes?

The most satisfactory explanation I have found so far is that the partial fraction decomposition of
\begin{equation}
    \frac{1}{(z-\tau_{k-1})^2(z-\tau_k)^3(z-\tau_{k+1})^2}
\end{equation}
has zero coefficient\footnote{This is similar to the contour integral explanation of the well-known extra order of accuracy of Simpson's rule: we fit three points, but in the equally-spaced case we get exact fit for a cubic, not just a quadratic.} of $1/(z-\tau_k)^2$ if (and only if) $\tau_{k+1}-\tau_k = \tau_k-\tau_{k-1}$.    Explicitly, here, if $\tau_{k+1}-\tau_k = \tau_k-\tau_{k-1} = h$ (as we will see without loss of generality we may take $h=1$) we have the following
\begin{theorem}
Let $t$ and $h$ be fixed complex numbers, $h \ne 0$, and $p(z)$ be a polynomial in the variable $z$.
If the polynomial $p(z)$ has degree at most $5$, then the values of $p(t-h)$, $p'(t-h)$, $p(t)$, $p(t+h)$, and $p'(t+h)$ determine $p''(t)$ uniquely.  Simultaneously, if $p(z)$ has degree less than or equal to $4$, those same values determine $p'(t)$ uniquely.
\end{theorem}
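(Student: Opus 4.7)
The plan is to mimic the contour-integral / partial-fraction technique already employed in Section~\ref{sec:compact}, particularly in the derivation of equation~\eqref{eq:variablecompact}. By an affine change of variable in $z$, we may assume $t=0$ and $h=1$, so the three nodes of interest are $-1$, $0$, and $1$; the general case follows by linear rescaling, which preserves degree. The footnote in the excerpt already identifies the key mechanism: a partial-fraction coefficient vanishes because of an even-odd symmetry inherited from the equal spacing.

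For the first claim (degree $\le 5$ determines $p''(0)$), consider the rational function
\[
\frac{1}{(z+1)^2\, z^3\, (z-1)^2},
\]
whose denominator has total degree $7$. If $p(z)$ has degree at most $5$, then $p(z)$ times this function decays at least like $|z|^{-2}$ at infinity, so its contour integral around a sufficiently large circle vanishes. Partial-fraction expansion and the Cauchy integral formula rewrite that zero as a linear combination of $p(\pm 1)$, $p'(\pm 1)$, $p(0)$, $p'(0)$, and $p''(0)$. The crucial observation is that the coefficient of $p'(0)$ equals the coefficient of $z$ in the Taylor expansion at $z=0$ of
\[
\frac{1}{(z+1)^2(z-1)^2} \;=\; \frac{1}{(z^2-1)^2},
\]
which vanishes because this function is even in $z$. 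The coefficient of $p''(0)$ is $\tfrac{1}{2!}$ times the value of $1/(z^2-1)^2$ at $z=0$, namely $\tfrac{1}{2}\neq 0$. Hence the identity can be solved for $p''(0)$ as an explicit linear combination of the five listed data values. Uniqueness is immediate: if two polynomials of degree $\le 5$ share those five data values, their difference also satisfies the identity with zero left-hand side, so its second derivative at $0$ is zero, meaning the two agree at $p''(0)$.

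For the second claim (degree $\le 4$ determines $p'(0)$), repeat the argument with the denominator
\[
\frac{1}{(z+1)^2\, z^2\, (z-1)^2},
\]
of total degree $6$, so the contour integral vanishes whenever $p$ has degree at most $4$. The double pole at $z=0$ contributes a term $B_1\, p(0)+B_2\, p'(0)$, where $B_1$ is the derivative at $z=0$ of the even function $1/(z^2-1)^2$, hence zero, and $B_2 = 1/(z^2-1)^2\bigl|_{z=0}=1\neq 0$. Thus the resulting identity expresses $p'(0)$ uniquely in terms of $p(\pm 1)$, $p'(\pm 1)$, and $p(0)$, and uniqueness of $p'(0)$ follows by the same linearity argument as before.

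I do not anticipate any serious obstacle: the only content beyond mechanical bookkeeping is the parity observation that $1/(z^2-1)^2$ is even in $z$, which is precisely the symmetry of the nodes $\pm h$ about $t$ and the same mechanism mentioned in the footnote regarding Simpson's rule. The mild care needed is in stating the contour-integral identity explicitly enough (or at least its existence with nonzero coefficient of the target derivative) to justify the word ``uniquely'' in the conclusion.
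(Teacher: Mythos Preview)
Your proposal is correct and follows essentially the same route as the paper: an affine change to nodes $-1,0,1$, the contour integral of $p(z)/((z+1)^2 z^3 (z-1)^2)$ (respectively $p(z)/((z+1)^2 z^2 (z-1)^2)$), and the observation that the coefficient of $1/z^2$ (respectively $1/z$) vanishes so that the resulting identity pins down $p''(0)$ (respectively $p'(0)$). The only cosmetic difference is that the paper writes out the full partial-fraction decomposition explicitly and reads off all six coefficients, whereas you extract just the two relevant Laurent coefficients at $z=0$ from the parity of $1/(z^2-1)^2$; both arguments identify the same zero residue, and the paper likewise finishes the degree-$4$ claim by pointing back to the already-derived equation~\eqref{eq:compactequispaced1}.
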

\begin{proof}
For intelligibility we may without loss of generality replace $z$ with $\theta$ where $z = t + \theta h$, so $-1 \le \theta \le 1$. Then we have the partial fraction decomposition
\begin{equation}
    \frac{1}{(\theta+1)^2\theta^3(\theta-1)^2} = -\frac{\sfrac{1}{4}}{(\theta+1)^2} -\frac{1}{\theta+1} + \frac{1}{\theta^3} + \frac{2}{\theta} + \frac{1}{\theta-1} + \frac{\sfrac14}{(\theta-1)^2}\>,
\end{equation}
which (as noted in the preamble to the theorem) has no $1/\theta^2$ term. By a standard argument, the contour integral
\begin{equation}
   \frac{1}{2\pi i} \oint_C \frac{p(\theta)}{(\theta+1)^2\theta^3(\theta-1)^2}\,d\theta = 0 
\end{equation}
for all polynomials of degree~$5$ or less if the contour~$C$ contains $-1$, $0$, and $1$. Therefore the partial fraction decomposition and Cauchy's integral formula gives
\begin{equation}
    0 = -p(-1) + 2p(0) - p(1) + \frac{1}{4}\left( p'(1) - p'(-1)\right) + \frac{1}{2}p''(0)\>.
\end{equation}
This means that if we know the value of the function at three consecutive points (which for a spline would take two cubic polynomials to fit), with equal width intervals on either side of the midpoint $0$, and we know the first derivatives at the two endpoints, then the value of the \textsl{second} derivative at the midpoint $0$ is determined irrespective of the value of the first derivative $p'(0)$.
Since these same quantities determine $p'(0)$ by equation~\eqref{eq:compactequispaced1} for polynomials of degree at most four, the proof is complete.
{\hfill$\natural$}
\end{proof}
Undoing the nondimensionalization and translating this into a finite-difference formula on the three points $\tau_{k-1}$, $\tau_k$, $\tau_{k+1}$ where $h = \tau_{k+1}-\tau_k = \tau_k-\tau_{k+1}$, we have
\begin{equation}
    f''(\tau_k) = 2\frac{ f(\tau_{k+1})-2f(\tau_k)+f(\tau_{k-1})}{h^2} - \frac{f'(\tau_{k+1})-f'(\tau_{k-1})}{2h}  + \frac{1}{360}f^{(6)}(0) h^4 + O(h^8)\>. \label{eq:secondderfourth}
\end{equation}
Notice the familiar second difference formula and the centered difference formula (using $f'$) appearing.  That this combination is fourth-order accurate was new to me but hardly seems surprising; doubtless the formula is in one book of finite differences or another, perhaps~\cite{MilneThomson(1951)}, \cite{Collatz(1966)}, \cite{jordan1965calculus}, or even~\cite{boole1860treatise}.  The error term in equation~\eqref{eq:secondderfourth} was computed in Maple, and indicates that the formula is exact if the sixth derivative is identically zero, \textsl{i.e.} if $f(z)$ is a polynomial of degree at most five. 
Since the value of the derivative at $\tau_k$ is not used, we can see that imposing a fourth-order accurate value of the derivative there must respect this constraint on the second derivative.

An alternative view is that in the equally-spaced case we may force an approximate value of $f'(\tau_k)$ that is in $O(1)$ error without altering this finite-difference value of $f''(\tau_k)$. 

That this symmetry forces the value of the second derivative to be a particular value for all fourth-order accurate derivatives still seems somewhat surprising.  The key seems to be the extra accuracy, allowing the second derivative to be determined, that is permitted by the zero residue in the (locally) equally-spaced case.



\section{Numerical tests}
I wrote a small didactic (and therefore relatively inefficient) \ML\ program that I called \texttt{compactcubic}, based on the codes \texttt{pchiptx} and \texttt{splinetx} from~\cite{Moler(2004)}, and which I will make available, either in the code repository for~\cite{corless2013graduate} or some other convenient place.  

Rather than leave the interpolant in the Hermite interpolational basis, which I prefer, for ease of comparison I computed the local monomial basis coefficients from the derivatives, much as is done in \texttt{splinetx}.  Indeed the formul\ae\ are very similar. I added the ability to output the first derivative. 
I also examine the $2$nd derivative, which turns out to be informative, though normally clearly departing from the $2$nd derivative of the underlying function. That this is not continuous for nonuniform meshes is demonstrated in figure~\ref{fig:ChebyshevRungeSecond7}.

I ran this on several functions, notably the Runge example $f(x) = 1/(1+25x^2)$ and the signum function (\texttt{sign} in \ML), using uniform meshes and nonuniform meshes of random widths.  The nonuniform meshes of random widths were constructed by first generating the random widths with \texttt{rand(1,n)} and then constructing the nodes by use of \texttt{cumsum}.

Some of the results are given in figures~\ref{fig:RandomMeshesRunge} and~\ref{fig:UniformMeshesRunge}.  In these tests, the programs appear to have behaved satisfactorily.  The condition numbers of the tridiagonal matrices depend on the node family used.  Equally-spaced nodes generate a condition number essentially constant; random meshes generate condition numbers that depend on the mesh spacing, but not the dimension.   The $O(1/n^4)$ behaviour of the errors on uniform grids and the $O(1/n^3)$ behaviour of the errors of the derivative are shown.


\begin{figure}[ht]
\begin{center}
\includegraphics[width=\defaultwidth]{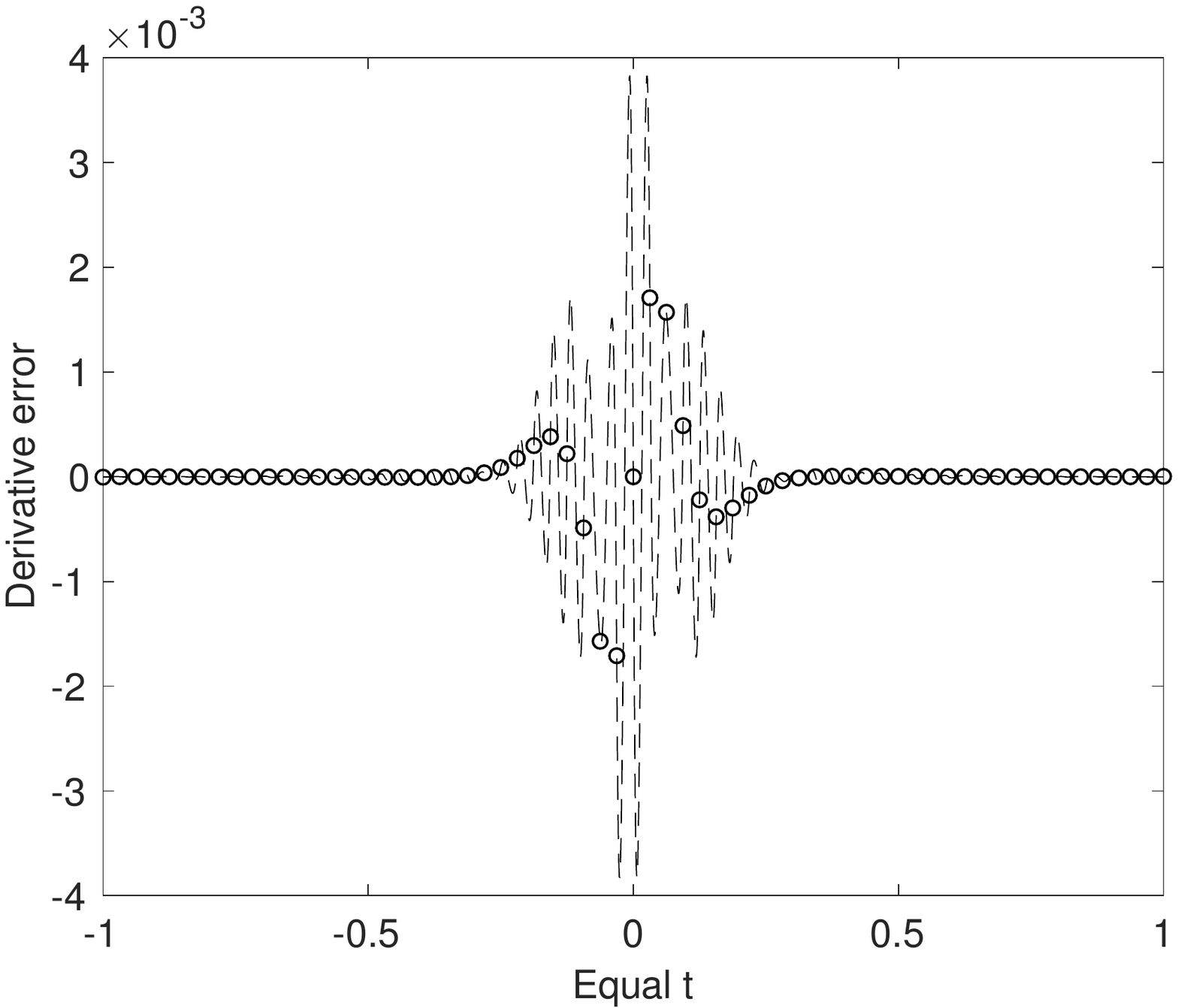}
\caption[equal mesh derivative error]{\label{fig:equal64RungeDerivative} Plot of the error in the fourth-order uniform mesh ($n=64$) compact finite difference \textsl{derivative}, on the Runge example $f(x) = 1/(1+25x^2)$, on the interval $[-1,1]$.  Solid line is the error between mesh points.  The error is largest in the middle, and believably $O(h^3)$.  The error in the interpolant itself (not the derivative as we have it in this figure) is $O(h^4)$. }
\end{center}
\end{figure}
\begin{figure}[ht]
\begin{center}
\includegraphics[width=\defaultwidth]{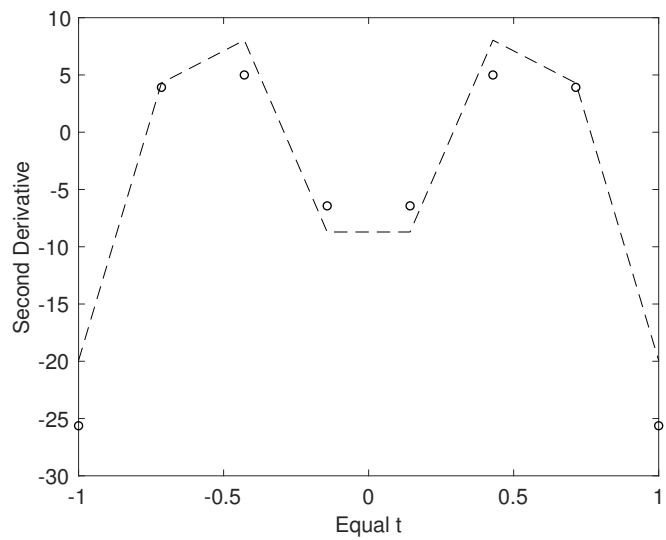}
\caption[Second derivative error]{\label{fig:equal7Runge2der} Plot of the error in \textsl{second} derivative of the  cubic Hermite interpolant. We use $n=7$ subintervals on $[-1,1]$ for the Runge example $f(x) = 1/(1+25x^2)$.  The graph shows clearly that the second derivatives of the spline are continuous at the nodes, although as expected they do not match the second derivatives at the nodes computed by the repeated application of \texttt{vcompact4.m}.}
\end{center}
\end{figure}

\begin{figure}[ht]
\begin{center}
\includegraphics[width=\defaultwidth]{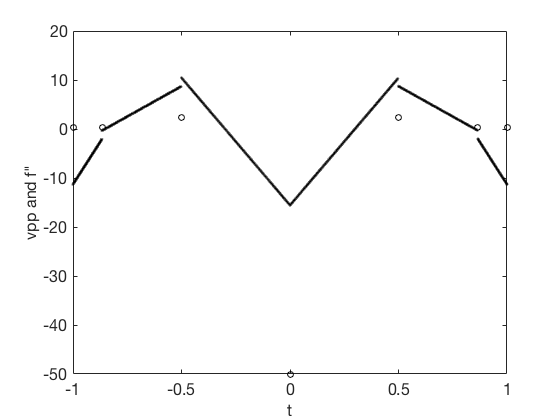}
\caption[Second derivative error]{\label{fig:ChebyshevRungeSecond7} Plot of the error in \textsl{second} derivative of the  compact cubic interpolant. We use $n=7$ subintervals with nonuniform (in fact, Chebyshev) nodes on $[-1,1]$ for the Runge example $f(x) = 1/(1+25x^2)$.  The graph shows clearly that the (linear) second derivatives of the cubic spline are \textsl{not} continuous at the nodes, as they would have been for equally-spaced nodes.  Circles are the exact second derivatives at the nodes.  Accuracy is about as expected with only seven nodes and errors are clearly visible.}
\end{center}
\end{figure}
\begin{remark}
Somewhat annoyingly, the plain vanilla \ML\ built-in spline function and piecewise polynomial evaluation routines do not have automatic access to evaluating the derivatives of the piecewise functions. Of course one could write one's own spline differentiator using \texttt{unmkpp} and \texttt{mkpp} (it only takes two lines of code), but bookkeeping other people's choices of piecewise polynomial representation is not always trivial, and the user might not notice the solution given in the help page for \texttt{mkpp}.  I believe that the built-in ability to take derivatives is always helpful\footnote{As a colleague points out, piecewise polynomial differentiation functionality \textsl{is} available in the \texttt{CurveFitting} package, which can be downloaded and installed (at my institution, at no extra cost; on the Mathworks page it offers the possibility of a free trial).}.
But for this paper, to make comparisons, instead of using \texttt{mkpp} and \texttt{unmkpp} I modified the \texttt{splinetx.m} function to return derivatives.  

As predicted, for equally-spaced nodes the derivatives were the same.  For Chebyshev nodes the derivatives were nearly indistinguishable ($r-s$ is typically quite small); for Fibonacci nodes there was a difference, but not much. 
\end{remark}

\subsection{Contour Integrals and Higher Derivatives \label{sec:higher}}
The contour integral technique used above is of course classical.  I learned it from John C.~Butcher, who used the following elegant variation in~\cite{Butcher(1967)} and later in~\cite{Butcher(2010)} to compute higher derivatives. 
The idea is to multiply by another factor, which I call the Butcher factor and denote by $B(z)$, that zeros out the unneeded and interfering residues.  As an example, suppose that we wish to find a formula for the second derivative of a cubic spline.  We then wish a rational function with denominator $(z-\tau_n)^2(z-t)^3(z-\tau_{n+1})^3$, so that we will pick up a term of the form $1/(z-t)^3$ which will supply us with $p''(t)/2!$ by the Cauchy Integral Formula.  But this approach will also generate a term for $p(t)$ and another for $p'(t)$, all mixed in together; so we introduce the Butcher factor
\begin{equation}
    B(z) = b_0 + b_1 z + b_2 z^2
\end{equation}
and choose the coefficients $b_0$, $b_1$, and $b_2$ (not all zero) so that the residues of $1/(z-t)$ and $1/(z-t)^2$ are zero.  This gives us two linear equations in three variables to solve, which we can do straightforwardly. One reasonably presentable solution is to put $t=\tau_n+ h\phi$, $z = \tau_n + h\theta$ and $\tau_{n+1}= \tau_n + h$ for neatness, and then
\begin{equation}
    B(\theta) = \left( \left( 3\,\phi-2 \right) {\phi}^{3}\right) -\left(2\,{\phi}
^{2} \left( 4\,\phi-3 \right)\right)\,\theta + \left( 6\,{\phi}^{2}-6\,\phi+1 \right)\,{\theta}^{2}
\end{equation}
which gives in turn
\begin{equation}
\frac{B(\theta)}{\theta^2(\theta-\phi)^3(\theta-1)^2} =
{\frac {-3\,\phi+1}{ \left( \theta-1 \right) ^{2}}}+{\frac {-6\,\phi+3
}{\theta}}+{\frac {6\,\phi-3}{\theta-1}}+{\frac {-3\,\phi+2}{{\theta}^
{2}}}+ \frac{1}{\left( \theta-\phi \right)^3}
     \>.
\end{equation}
Notice that as intended the coefficients of $1/(z-t)$ and $1/(z-t)^2$ (equivalently $1/(\theta-\phi)$ and $1/(\theta-\phi)^2$) are zero.
This in turn gives (after the same contour argument as before) that 
\begin{equation}
    \frac{2-3\phi}{h}p'(\tau_n)
 +\frac{1-3\phi}{h}p'(\tau_{n+1})+ \frac{p''(t)}{2!}
    + \frac{6\phi - 3}{h^2}p(\tau_{n+1}) + \frac{3-6\phi}{h}p(\tau_n) = 0\>,
\end{equation}
if the degree of $p(z)$ is at most three. Rearranging, we get
\begin{equation}
    p''(\tau_n+\phi h) = \frac{2(3\phi-2)}{h} p'(\tau_n) + \frac{2(3\phi-1)}{h}p'(t_{n+1}) + \frac{6(1-2\phi)}{h}\left( \frac{p(t_{n+1})-p(\tau_n)}{h}\right)\>.
\end{equation}
To use the same idea to find a compact formula for the second derivative given only function values on equally-spaced nodes, the idea is to search for a Butcher factor $B(z)$ that makes the coefficients of $(\theta+1)^{-2}$, $\theta^{-2}$, and $(\theta-1)^{-2}$ zero in the partial fraction expansion of 
\begin{equation}
\frac{b_0 + b_1 \theta + b_2 \theta^2}{(\theta+1)^3 \theta^3 (\theta-1)^3}\>.
\end{equation}
Here we have taken advantage of the equal spacing of the mesh to put $z = \tau_k + \theta h$ again. A short computation shows that $B(\theta) = 9\theta^2-5$ gives
\begin{equation}
\frac{9 \theta^2-5}{(\theta+1)^3 \theta^3 (\theta-1)^3} = \frac{-3}{\theta+1} + \frac{6}{\theta} - \frac{3}{\theta-1} + \frac{\sfrac12}{(\theta+1)^3} + \frac{5}{\theta^3} + \frac{\sfrac12}{(\theta-1)^3}\>.
\end{equation}
After using the Cauchy Integral Formula again, we find the following fourth-order formula
\begin{equation}
p''(\tau_n-h) + 10 p''(\tau_n) + p''(\tau_n+h) = \frac{12}{h^2}\left( p(\tau_n-h) - 2p(\tau_n) + p(\tau_n+h)\right)\>.
\end{equation}
The Cauchy formula already tells us that this is exact for polynomials of degree at most $5$. A separate Taylor series expansion (trivial in a computer algebra system) gives that the error term for functions not polynomials of degree at most five is asymptotically, as $h\to 0$,
\begin{equation}
\frac{h^4}{20} p^{(6)}(\tau_n) + \frac{11h^6}{5040} p^{(8)}(\tau_n) + O(h^8)\>.
\end{equation}

\subsection{Notes and further reading\label{sec:other}}

The books~\cite{deBoor(1978)} and~\cite{jordan1965calculus} are each rich sources of the history of their respective subjects. Much of this section comes from them. 
The word ``spline'' comes from the same root as the English word ``splinter'', meaning a thin fragment of wood; such wooden pieces gave rise to a flexible mechanical device that was used, apparently in shipbuilding, to draw smooth curves between fixed points.  I.~J.~Schoenberg in~\cite{Schoenberg1946contributions} chose the name ``spline'' for the piecewise cubic polynomials computed in this fashion because they minimize an approximation of the strain energy that the original flexible instrument minimized.  The modern terminology, and in particular the difference between a cubic spline, a \texttt{pchip}, and a true cubic Hermite interpolant (where the derivative values $f'(\tau_k)$ are also known exactly and not chosen by the algorithm), is somewhat confusing, sometimes even to experts.

I was unable to trace the first use of the word ``compact'' for finite differences that are defined as a relation amongst function and derivative values on a small set of mesh points.  Some authors use instead ``optimal'', conveying that the maximum accuracy possible is attained given the constraints on the number of mesh points.  Collatz used the term ``Mehrstellenverfahren'', which is still used.

This paper is concerned with the connection between these two concepts. To explain this connection I used the contour integral approach.  As is very well known, this is not the only approach to finite differences, compact or otherwise.  

One standard approach is simply to use brute Taylor expansion of an ansatz with undetermined coefficients, then set all the desired coefficients of powers of the step size to zero, and brutally solve for the unknowns.  Doing this once and for all for a fixed (possibly symbolic) grid in a computer algebra system is perfectly straightforward and perfectly useful, and I have used this brutal method in a few papers such as~\cite{Zhao(2007)}, and before that in~\cite{corless1996symbolic}.  As pointed out in~\cite{fornberg1998classroom} this approach cannot easily be used in a purely numerical environment because the linear systems are often quite ill-conditioned, and more so for larger formul\ae; but in a computer algebra system the arithmetic can be done exactly (if rational), or at whatever precision desired, and the expense of doing so is amortized over the life of the \textsl{use} of the formul\ae.  

Also as pointed out in~\cite{fornberg1998classroom}, and in~\cite{corless2013graduate}, the \textsl{use} of the formul\ae\ (however obtained) is subject to numerical difficulty; this is because differentiation is infinitely ill-conditioned.  But the brute-force approach of generating formul\ae\ by computer algebra systems works and gets formul\ae\ as accurate as any method does\footnote{Well, maybe there is still room for blunders.  The paper~\cite{keller1978symbolic} used Macsyma to generate weights for finite difference formul\ae.  The paper~\cite{fornberg1988generation} points out that the tables in~\cite{keller1978symbolic} contained ``both isolated and systematic errors''. Certainly while I have been as careful as I can for this paper, and I have taken out a lot of typos, I have probably introduced some in transferring formul\ae\ from Maple to \LaTeX.}.

I believe that the contour integral approach is better at \textsl{explanation}, however. Here, of course, we are trying to understand the connection of cubic splines to compact finite differences, and the contour integral approach supplied a satisfactory explanation.  Contour methods are often used in this way.  According to~\cite{weideman2005pade} contour integrals were used by the great number theorist K.~Mahler to explore Pad\'e approximations to the logarithms of algebraic numbers in~\cite{mahler1953approximation}.
The connection to Pad\'e approximation (and indeed compact finite difference formul\ae\ are often called ``Pad\'e methods'') arises because of another, perhaps even more elegant method for constructing finite difference formul\ae, the \textsl{operator method}. One version of the operator method, derived by applying difference operations to the exponential function, is used to great effect in~\cite{fornberg1998classroom} to compute weights in many finite-difference formul\ae\ and to give general recurrence relations for doing so on arbitrary grids. In older works, such as~\cite{fox1957numerical}, we see a more ``formal'' treatment of operator methods; here `formal' means that rigour is \textsl{not} maintained throughout the computation (a curious usage of this word, that often causes puzzlement, meaning only that computations are just carried out by `form' and not worrying until later whether they were correct). 
The operator method was recently blogged about in
{Scientific American} by~\cite{Lamb:2019}. I gave a talk on this method at a CAIMS meeting in Winnipeg in 2005. It seems to be common in early works on finite differences, such as~\cite{MilneThomson(1951)}, and is at least a hundred and sixty years old, being present in~\cite{boole1860treatise}; in fact, according to~\cite{jordan1965calculus} the method is due to Lagrange himself. 

The operator method rests on an analogy: the shift operator $E_h(f)(t) = f(t+h)$ can be expressed in terms of the derivative operator $hD$ by Taylor's theorem for analytic functions as
\begin{equation}
E_h(f)(t) = f(t+h) = \sum_{k\ge 0} \frac{h^k}{k!} D^{(k)}(f)(t),
\end{equation}
where the symbol $D^{(k)}$ means repeated application of the differentiation operator. By analogy with multiplication, this is written as $E_h = \exp( hD )(f)(t)$, which though likely familiar to the reader from various contexts is actually a remarkable leap to a definition of an exponential of an operator. Formally solving for $D$ gives $D = \ln(E_h)(f)(t)/h = \ln(1+\Delta)(f)(t)/h$, motivating Pad\'e approximations to the logarithm near $1$ (or the inverse sinh function near $0$ because analogously $\exp(hD/2)-\exp(-hD/2) = 2\sinh(hD/2)$ applied to $f(t)$ gives $f(t+h/2)-f(t-h/2)$ and so the inverse sinh function expresses $D$ in terms of compositions of central differences). The resulting Pad\'e approximations then give us compact finite difference formul\ae.

The paper~\cite{weideman2005pade} extends this idea to Hermite-Pad\'e approximations, and solves several general problems associated with these.
Further, that paper details an open problem about explicit descriptions of arbitrary width compact formul\ae\ for the \textsl{second} derivative; I do not know if the Butcher factor idea for contour integration can be used to attack that open problem, but John C.~Butcher \textsl{did} use his approach to solve another general degree interpolation problem in quadrature (we detailed his solution in~\cite{Butcher(2010)}), so it might be possible. I note that one can establish that a necessary condition for the unwanted residues to vanish if the degree of $B(z)$ is to be no more than $n$ is that at each node $\tau_k$, $0 \le k \le n$, the Butcher factor must have 
\begin{equation}
 B'(\tau_k)  - 3\left( \sum_{\substack{j=0\\j\ne k}}^n \frac{1}{\tau_k-\tau_j} \right)B(\tau_k) = 0\>,
\end{equation}
which implies that $B(z)$ can be identified in the Lagrange basis by its values on the nodes by finding a vector in the nullspace of $\mat{D}-3\diag(s_0, s_1, \ldots s_n)$ where $s_k = \sum_{\substack{j=0\\j\ne k}}^n \frac{1}{\tau_k-\tau_j}$ and $\mat{D}$ is the differentiation matrix on the nodes (for differentiation matrices, see e.g.~\cite{amiraslani2018differentiation}).
I leave this to future work.  
\subsection{Higher dimension}
Compact finite differences are widely used in problems having more than one dimension. An example on a regular grid is~\cite{Rokicki(1995)}, and on irregular grids is~\cite{rokicki1999higher}. Automatic computation of such formul\ae\ for scattered nodes in more than one dimension are considered in more detail in the University of Western Ontario Ph.D.\ thesis of Jichao Zhao (2006) and a subsequent paper~\cite{Zhao(2006)} and a Maple Share Library package released in that same year updating the original 1994 package.  Other researchers have also studied the problem, for example~\cite{wright2006scattered} who use radial basis functions and were likely the first to do this in full generality.

\section{Concluding remarks, and a marriage of convenience}
The usual methods of dealing with the two extra
degrees of freedom of a cubic spline have always seemed somewhat unnatural to me.  As has been demonstrated with many psychological studies, the extra choice seems unwelcome.  What I recommend here is to use compact finite differences to not only compute the derivatives at the interior nodes, but also at the endpoints.  This is very similar in spirit to the recommendation of~\cite{swartz1972error}, except here we are using a compact finite difference formula to give fourth-order accuracy, where they recommended a third-order accurate direct formula.  The method recommended here requires essentially no more effort, being simply a modification of the tridiagonal system needed to find the spline derivatives, and has higher order accuracy for the derivatives at the edges.  This new compact method, with its positive definite (totally nonnegative) matrices, therefore seems as though it will be as good a method as any, and might be psychologically more satisfying in that the derivatives at the end are determined by the data.  Of course, they are also determined by the data, in a sense, if instead the ``not-a-knot'' condition is used, but that makes a qualitative difference with the two subintervals at each end.
Whether it makes a \textsl{practical} difference is another matter.  

I have not made extensive comparisons with \ML\ 's built-in \texttt{spline}, for example (but the few experiments I have done show that the behaviour is at least similar). The fact that the Not-A-Knot condition generates an $O(h^3)$ error in the derivative at the end may or may not be significant.
Nor have I made a choice between the $2+\sqrt{3}$ method---which needs at least $n=4$ so therefore $5$ nodes and has an analytic factoring for uniform meshes---and the $h_2(h_2+h_3)/((h_1+h_2)(h_1+h_2+h_3))$ method, which also needs at least $5$ nodes because its four-by-four matrix is singular; in the first case the condition number of the matrix is smaller, but the truncation error is larger (by a similar factor). My only experience with this in practice is with the uniform mesh case, where the efficiency of the analytic factoring is noticeably useful. More experimentation is necessary to learn if the variable mesh formula is truly useful.

The method has been coded in \ML\ as the program \texttt{compactcubic.m} and will be made available at the code repository for~\cite{corless2013graduate}, namely \url{nfillion.com/coderepository}.  

It seems that this approach gives a potentially reasonable alternative to splines, in that its derivative can be expected to be more accurate \textsl{at the nodes} for nonuniform meshes, and one does not have to make a choice about what to do at the endpoints.  That first caveat, ``at the nodes'' is important.  The error in piecewise cubic interpolation means that the derivatives \textsl{between the nodes} can only be $O(h^3)$ accurate.  This weakens the case for the compact cubic interpolant proposed here, it's true; away from the nodes, one doesn't see more accurate derivatives.

This approach needs a name to distinguish it from ``spline'', ``cubic Hermite'', and ``pchip'' (itself a misleading name for the interpolant whose derivatives are chosen to preserve qualitative features, at the cost of even less accuracy in the derivative).  I propose the name ``compact cubic interpolant'' for this construction.  It also needs an efficient and stable implementation; probably conversion to the local monomial basis is the most efficient, as is done here, but there are likely other tricks needed to ensure that the computation is as stable as possible.

I realize that I have not made a strong case for using this method---cubic splines are pretty good, after all, even if they occasionally wiggle too much (which is what \texttt{pchip} is for)---but the exploration of this alternative has yielded \textsl{some} advantages for the compact cubic method: file-it-and-forget-it treatment of the edges, and in theory more accurate derivatives at the nodes.  If one is using compact finite differences for some other purpose anyway (perhaps as part of solving a PDE by the method of lines) then the analysis in this paper may help you to choose to use the compact cubic interpolant.

I remark that totally positive (nonnegative) matrices occur in various interpolation problems---see~\cite{MARCO201763} for an example.

The other result of this paper, namely an explanation, using contour integrals and residues, of the apparent coincidence that cubic splines give the same derivatives that the fourth order Pad\'e compact formula does, is only one such explanation.  This coincidence can also be explained by noting first (as can be found in Appendix A of~\cite{fornberg2015primer} for instance) that a cubic spline over $x_0-h, x_0, x_0+h$ is the sum of a pure cubic and a multiple of $|x-x_0|^3$. Then since the compact formula is exact for pure cubics, one only needs to think about its behaviour on $|x-x_0|^3$; but this is an even function, and a moment's reflection shows that~\eqref{eq:spline} is exact for even functions, giving zero derivative at the centre; so since the equation is exact for each, it is exact for their sum, and hence for splines. Thus this coincidence might not be so surprising for one ``skilled in the art''.  This argument works only in the case of real nodes, of course, whereas the residue argument works over~$\mathbb{C}$, but the overwhelming majority of spline interpolation takes place over~$\mathbb{R}$, so that doesn't matter much. Still, I think that this coincidence has hardly been \textsl{noticed} in the literature; that it can be explained several ways is perhaps not a surprise. I do like the residue explanation in preference to the $|x-x_0|^3$ explanation because zero residues are connected to the extra degree of accuracy obtained in Simpson's Rule, and connected to the Butcher method of finding higher derivative formul\ae\ and solving the Birkhoff interpolation problem.  This, however, is clearly a matter of taste: I am certain that there are many who would prefer the explanation using $|x-x_0|^3$.

A referee points out that I have not reviewed the spline literature thoroughly in this paper, omitting reference even to the classic book~\cite{schumaker2007spline}. In my defence I point out that this paper is already too long.


\begin{thebibliography}{}

\bibitem[Amiraslani {\em et~al.}(2018)Amiraslani, Corless, \&
  Gunasingam]{amiraslani2018differentiation}
{\sc Amiraslani, A., Corless, R.~M. \& Gunasingam, M.} (2018)
\newblock Differentiation matrices for univariate polynomials.
\newblock {\em Numerical Algorithms\/}, 1--31.

\bibitem[Boole(1960)Boole]{boole1860treatise}
{\sc Boole, G.} (1960)
\newblock {\em A treatise on the calculus of finite differences\/}.
\newblock Dover.
\newblock (Orig. pub 1860. 2nd edition 1872 edited by J.~F.~Moulton).

\bibitem[Butcher(1967)Butcher]{Butcher(1967)}
{\sc Butcher, J.~C.} (1967)
\newblock A multistep generalization of {R}unge-{K}utta methods with four or
  five stages.
\newblock {\em J. ACM\/}, {\bf 14}, 84--99.

\bibitem[Butcher {\em et~al.}(2010)Butcher, Corless, Gonzalez-Vega, \&
  Shakoori]{Butcher(2010)}
{\sc Butcher, J.~C., Corless, R.~M., Gonzalez-Vega, L. \& Shakoori, A.} (2010)
\newblock Polynomial algebra for {Birkhoff} interpolants.
\newblock {\em Numerical Algorithms\/}, 1--29.

\bibitem[Collatz(1966)Collatz]{Collatz(1966)}
{\sc Collatz, L.} (1966)
\newblock {\em The numerical treatment of differential equations\/}, 3rd edn.
\newblock Springer-Verlag.

\bibitem[Corless \& Fillion(2013)Corless \& Fillion]{corless2013graduate}
{\sc Corless, R.~M. \& Fillion, N.} (2013)
\newblock {\em A graduate introduction to numerical methods\/},  vol.~10.
\newblock Springer, p.~12.

\bibitem[Corless \& Rokicki(1996)Corless \& Rokicki]{corless1996symbolic}
{\sc Corless, R.~M. \& Rokicki, J.} (1996)
\newblock The symbolic generation of finite-difference formulae.
\newblock {\em Z.A.M.M.}, {\bf 76}, 381--382.

\bibitem[de~Boor(1978)de~Boor]{deBoor(1978)}
{\sc de~Boor, C.} (1978)
\newblock {\em A practical guide to splines\/}.
\newblock Springer-Verlag.

\bibitem[Fornberg(1988)Fornberg]{fornberg1988generation}
{\sc Fornberg, B.} (1988)
\newblock Generation of finite difference formulas on arbitrarily spaced grids.
\newblock {\em Mathematics of computation\/}, {\bf 51}, 699--706.

\bibitem[Fornberg(1998)Fornberg]{fornberg1998classroom}
{\sc Fornberg, B.} (1998)
\newblock Classroom note: Calculation of weights in finite difference formulas.
\newblock {\em SIAM Review\/}, {\bf 40}, 685--691.

\bibitem[Fornberg \& Flyer(2015)Fornberg \& Flyer]{fornberg2015primer}
{\sc Fornberg, B. \& Flyer, N.} (2015)
\newblock {\em A primer on radial basis functions with applications to the
  geosciences\/}.
\newblock SIAM.

\bibitem[Fox(1957)Fox]{fox1957numerical}
{\sc Fox, L.} (1957)
\newblock {\em The numerical solution of two-point boundary problems in
  ordinary differential equations\/}.
\newblock Oxford Clarendon.

\bibitem[Higham(2002)Higham]{Higham(1996)}
{\sc Higham, N.~J.} (2002)
\newblock {\em Accuracy and Stability of Numerical Algorithms\/}, 2nd edn.
\newblock Philadelphia: SIAM.

\bibitem[Hogben(2006)Hogben]{Hogben(2006)}
{\sc Hogben, L.} (ed.) (2006)
\newblock {\em Handbook of Linear Algebra\/}.
\newblock Chapman and Hall/CRC.

\bibitem[Jord{\'a}n(1965)Jord{\'a}n]{jordan1965calculus}
{\sc Jord{\'a}n, K.} (1965)
\newblock {\em Calculus of finite differences\/},  vol.~33.
\newblock American Mathematical Soc.

\bibitem[Keller \& Pereyra(1978)Keller \& Pereyra]{keller1978symbolic}
{\sc Keller, H. \& Pereyra, V.} (1978)
\newblock Symbolic generation of finite difference formulas.
\newblock {\em Mathematics of Computation\/}, {\bf 32}, 955--971.

\bibitem[Lamb(2019)Lamb]{Lamb:2019}
{\sc Lamb, E.} (2019)
\newblock The theorem that unites different kinds of calculus.
\newblock {\em Scientific American (blogs)\/}.

\bibitem[Lele(1992)Lele]{Lele(1992)}
{\sc Lele, S.~K.} (1992)
\newblock Compact finite difference schemes with spectral-like resolution.
\newblock {\em Journal of Computational Physics\/}, {\bf 103}, 16--42.

\bibitem[Mahler(1953)Mahler]{mahler1953approximation}
{\sc Mahler, K.} (1953)
\newblock On the approximation of logarithms of algebraic numbers.
\newblock {\em Philosophical Transactions of the Royal Society of London.
  Series A, Mathematical and Physical Sciences\/}, {\bf 245}, 371--398.

\bibitem[Marco {\em et~al.}(2017)Marco, Mart\'{\i}nez, \&
  Pe{\~n}a]{MARCO201763}
{\sc Marco, A., Mart\'{\i}nez, J.-J. \& Pe{\~n}a, J.~M.} (2017)
\newblock Accurate bidiagonal decomposition of totally positive
  {Cauchy–Vandermonde} matrices and applications.
\newblock {\em Linear Algebra and its Applications\/}, {\bf 517}, 63--84.

\bibitem[Milne-Thomson(1951)Milne-Thomson]{MilneThomson(1951)}
{\sc Milne-Thomson, L.~M.} (1951)
\newblock {\em The Calculus of Finite Differences\/}, 2nd edn.
\newblock MacMillan and Company.
\newblock (1st edition 1933).

\bibitem[Moler(2004)Moler]{Moler(2004)}
{\sc Moler, C.} (2004)
\newblock {\em Numerical {C}omputing with \textsc{MATLAB}\/}, {E}lectronic edn.
\newblock Philadelphia: SIAM.

\bibitem[Rokicki \& Floryan(1995)Rokicki \& Floryan]{Rokicki(1995)}
{\sc Rokicki, J. \& Floryan, J.} (1995)
\newblock Domain decomposition and the compact fourth-order algorithm for the
  {Navier-Stokes} equations.
\newblock {\em Journal of Computational Physics\/}, {\bf 116}, 79--96.

\bibitem[Rokicki \& Floryan(1999)Rokicki \& Floryan]{rokicki1999higher}
{\sc Rokicki, J. \& Floryan, J.} (1999)
\newblock Higher-order unstructured domain decomposition method for
  {Navier--Stokes} equations.
\newblock {\em Computers \& fluids\/}, {\bf 28}, 87--120.

\bibitem[Schoenberg(1946)Schoenberg]{Schoenberg1946contributions}
{\sc Schoenberg, I.~J.} (1946)
\newblock Contributions to the problem of approximation of equidistant data by
  analytic functions.
\newblock {\em Quart. Appl. Math.}, {\bf 4}, 45--99 and 112--141.

\bibitem[Schumaker(2007)Schumaker]{schumaker2007spline}
{\sc Schumaker, L.} (2007)
\newblock {\em Spline functions: basic theory\/}.
\newblock Cambridge University Press.

\bibitem[Swartz \& Varga(1972)Swartz \& Varga]{swartz1972error}
{\sc Swartz, B.~K. \& Varga, R.~S.} (1972)
\newblock Error bounds for spline and {L-spline} interpolation.
\newblock {\em Journal Of Approximation Theory\/}, {\bf 6}, 6--49.

\bibitem[Weideman(2005)Weideman]{weideman2005pade}
{\sc Weideman, J. A.~C.} (2005)
\newblock Pad{\'e} approximations to the logarithm {I}: Derivation via
  differential equations.
\newblock {\em Quaestiones Mathematicae\/}, {\bf 28}, 375--390.

\bibitem[Weideman \& Reddy(2000)Weideman \& Reddy]{Weideman(2000)}
{\sc Weideman, J. A.~C. \& Reddy, S.~C.} (2000)
\newblock A {\textsc{matlab}} differentiation matrix suite.
\newblock {\em ACM Trans. Math. Softw.}, {\bf 26}, 465--519.

\bibitem[Wright \& Fornberg(2006)Wright \& Fornberg]{wright2006scattered}
{\sc Wright, G.~B. \& Fornberg, B.} (2006)
\newblock Scattered node compact finite difference-type formulas generated from
  radial basis functions.
\newblock {\em Journal of Computational Physics\/}, {\bf 212}, 99--123.

\bibitem[Zhao {\em et~al.}(2007)Zhao, Davison, \& Corless]{Zhao(2007)}
{\sc Zhao, J., Davison, M. \& Corless, R.~M.} (2007)
\newblock Compact finite difference method for {A}merican option pricing.
\newblock {\em J. Comput. Appl. Math.}, {\bf 206}, 306--321.

\bibitem[Zhao \& Corless(2006)Zhao \& Corless]{Zhao(2006)}
{\sc Zhao, J. \& Corless, R.~M.} (2006)
\newblock Compact finite difference method for integro-differential equations.
\newblock {\em Applied Mathematics and Computation\/}, {\bf 177}, 271 -- 288.

\end{thebibliography}

\end{document}